\newtheorem{thm}{Theorem}[section]
\newtheorem{lem}[thm]{Lemma}
\newtheorem{cor}[thm]{Corollary}
\newtheorem{theoremintro}{Theorem}
\newtheorem{corintro}{Corollary}
\newcommand{\bbold}{\mathbb}
\newcommand{\nn}{\mathbb{N}}
\newcommand{\qq}{\mathbb{Q}}
\newcommand{\zz}{\mathbb{Z}}
\def\R { {\bbold R} }
\def\Z { {\bbold Z} }
\def\T { {\bbold T} }
\def \d{\operatorname{d}}
\def \<{\langle}
\def \>{\rangle}
\def\Mo{\operatorname{Mo}}
\DeclareFontFamily{OMS}{smallo}{}
\DeclareFontShape{OMS}{smallo}{m}{n}{<->s*[.65]cmsy10}{}
\DeclareSymbolFont{smallo@m}{OMS}{smallo}{m}{n}
\DeclareMathSymbol{\smallo}{\mathord}{smallo@m}{79}
\DeclareFontFamily{U}{fsy}{}
\DeclareFontShape{U}{fsy}{m}{n}{<->s*[.9]psyr}{}
\DeclareSymbolFont{der@m}{U}{fsy}{m}{n}
\DeclareMathSymbol{\der}{\mathord}{der@m}{182}
\begin{document}

\title{Definability in differential-henselian monotone fields}

\begin{abstract}
This paper is a sequel to \cite{TH} and considers definability in differential-henselian monotone fields with c-map and angular component map. We prove an Equivalence Theorem among whose consequences are a relative quantifier reduction and an NIP result. 
\end{abstract}

\author[Hakobyan]{Tigran Hakobyan}
\address{Department of Mathematics\\
University of Illinois at Urbana-Cham\-paign\\
Urbana, IL 61801\\
U.S.A.}
\email{hakobya2@illinois.edu}

\maketitle

\section{Introduction}

\medskip\noindent
Let $\mathbf{k}$ be a differential field (always of characteristic $0$ in this paper,
with a single distinguished derivation). Let also an ordered abelian group $\Gamma$ be given. This gives rise to the Hahn field 
$K=\mathbf{k}((t^\Gamma))$, to be considered in the usual way as a valued
field. Moreover, let an additive map $c:\Gamma\to \mathbf{k}$ be given. Then
the derivation $\der$ of $\mathbf{k}$ extends to a derivation $\der_c$ of $K$ with $\der_c(t^\gamma)=c(\gamma)t^\gamma$ for all $\gamma\in \Gamma$ by setting
$$ \der_c(\sum_{\gamma} a_{\gamma}t^{\gamma})\ :=\ \sum_\gamma \big(\der(a_\gamma) + c(\gamma) a_\gamma \big)t^\gamma.$$

\noindent
Let $K_c$ be the valued differential field $K$ with $\der_c$ as its distinguished derivation. 
Assume in addition that $\mathbf{k}$ is linearly surjective in the sense that for each nonzero linear differential
operator $A=a_0+a_1\der + \dots +a_n\der^n$ over $\mathbf{k}$ we have
$A(\mathbf{k})=\mathbf{k}$.
Then $K_c$ is differential-henselian and in \cite{TH} we considered these valued differential fields and among other things proved an Ax-Kochen-Ershov type theorem for them. Taking $c(\gamma)=0$ for all $\gamma$, this becomes the result of Scanlon in \cite{S}. The current paper is a sequel to \cite{TH} and focuses on definability, similar to Scanlon~\cite{S2}. 

More generally we consider, as in \cite[Section 6]{TH}, three-sorted structures
$$\mathcal{K}\ =\ (K, \mathbf{k}, \Gamma; \pi, v, c)$$
where $K$ and $\mathbf{k}$ are differential fields, $\Gamma$ is an ordered abelian group,
$v:K^\times \to \Gamma$ is a valuation which makes $K$ into a valued differential field with a valuation ring $\mathcal{O} := \mathcal{O}_v$ such that $K$ is monotone in the sense that $v(a') \geq v(a)$ for all $a\in K$,
$\pi:\mathcal{O} \to \mathbf{k}$ is a surjective differential ring morphism,
$c:\Gamma \to \mathbf{k}$ is an additive map satisfying
$\forall \gamma \exists x\ne 0  \big( v(x) = \gamma \ \&\  \pi(x^\dagger) = c(\gamma)\big)$.
To make the maps $\pi$ and $v$ total, we add a formal symbol $\infty$ to the sorts of the residue field and the value group as default values, i.e. $\pi(x) = \infty$ if and only if $x \in K \setminus \mathcal{O}$, and $v(x) = \infty$ if and only if $x = 0$.
We construe these $\mathcal{K}$ as $L_3$-structures for a natural $3$-sorted language $L_3$ (with unary function symbols for $\pi, v$ and $c$).
We have an obvious set $\textnormal{Mo}(c)$ of $L_3$-sentences whose models are exactly these $\mathcal{K}$. We refer to \cite[Section 6]{TH} for details, and just mention here one of the main results from \cite{TH}, namely that if $\mathcal{K}$ is
differential-henselian (as defined in the next section),
then $\textnormal{Th}(\mathcal{K})$ is completely axiomatized by 
$\textnormal{Mo}(c)$, axioms expressing differential-henselianity, and $\textnormal{Th}(\mathbf{k},\Gamma;c)$. 

To study definability we consider, as in Scanlon~\cite{S2},  expansions of such $\mathcal{K}$ by an  angular component map $\textnormal{ac} : K \to \mathbf{k}$ on $\mathcal{K}$. In Section 3  we introduce a suitable notion of  {\em angular component map on $\mathcal{K}$} (the definition is not completely obvious) and show its existence if $\mathcal{K}$ is $\aleph_1$-saturated. 

\medskip\noindent
To state our results precisely, we need to be more specific about the language. The language $L_3$ has three sorts: f (the main field sort), r  (the residue field sort), and
v (the value group sort); it contains an f-copy $L_{\text{f}} = \{0, 1, -, +, \cdot , \der\}$  of the language of differential rings, an r-copy
$L_{\text{r}}=\{0,1,-,+,\cdot,\bar{\der}\}$ of this language,  and a v-copy $L_{\text{v}} = \{\leq, 0, -, + \}$ of  the language of ordered abelian groups, with disjoint $L_{\text{f}}$, $L_{\text{r}}$, $L_{\text{v}}$.  It also has the unary function symbols $\pi, v, c$
of (mixed) sorts fr, fv, vr, respectively.  This completes the description of $L_3$. 
Let $L_{\textnormal{rv}}$ be the $2$-sorted sublanguage of $L_3$ consisting of $L_\textnormal{r}$ and $L_{\textnormal{v}}$,
 and the function symbol $c$. By $L_3(\textnormal{ac})$ we mean $L_3$ augmented by a new unary function symbol ac of sort fr. 
Let $T$ be the $L_3(\textnormal{ac})$-theory of $\d$-henselian monotone valued differential fields with angular component map.
Let the $L_3(\textnormal{ac})$-structures
$$\mathcal{K}_1\ =\ (K_1,\mathbf{k}_1,\Gamma_1;\ \pi_1, v_1, c_1, \textnormal{ac}_1), \qquad  \mathcal{K}_2\ =\ (K_2, \mathbf{k}_2, \Gamma_2;\ \pi_2, v_2, c_2, \textnormal{ac}_2)$$
be models of $T$. The main result of this paper is the Equivalence Theorem \ref{eqthm}
among whose consequences are the following (see Section 5):

\begin{theoremintro}\label{aa}
If $\mathcal{K}_1 \subseteq \mathcal{K}_2$ and $(\mathbf{k}_1, \Gamma_1; c_1) \preccurlyeq_{ L_{\textnormal{rv}}} (\mathbf{k}_2, \Gamma_2; c_2)$, 
then $\mathcal{K}_1 \preccurlyeq \mathcal{K}_2$.
\end{theoremintro}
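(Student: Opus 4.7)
The plan is to reduce Theorem~\ref{aa} to the Equivalence Theorem~\ref{eqthm} via a standard Ax--Kochen--Ershov style argument between saturated elementary extensions. Choose a cardinal $\kappa > |\mathcal{K}_2|$ and take $\kappa$-saturated elementary extensions $\mathcal{K}_1^* \succcurlyeq \mathcal{K}_1$ and $\mathcal{K}_2^* \succcurlyeq \mathcal{K}_2$, both of cardinality $\kappa$. To establish $\mathcal{K}_1 \preccurlyeq \mathcal{K}_2$ it suffices to construct an $L_3(\textnormal{ac})$-isomorphism $\Phi : \mathcal{K}_2^* \to \mathcal{K}_1^*$ that is the identity on $\mathcal{K}_1$; for then any $L_3(\textnormal{ac})$-formula $\varphi(x)$ and tuple $a$ from $\mathcal{K}_1$ satisfies
\[
\mathcal{K}_2 \models \varphi(a) \iff \mathcal{K}_2^* \models \varphi(a) \iff \mathcal{K}_1^* \models \varphi(a) \iff \mathcal{K}_1 \models \varphi(a),
\]
using elementarity of the extensions and the fact that $\Phi$ fixes $a$.

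To build $\Phi$, I would first handle the residue field and value group sorts. From the hypothesis $(\mathbf{k}_1, \Gamma_1; c_1) \preccurlyeq_{L_{\textnormal{rv}}} (\mathbf{k}_2, \Gamma_2; c_2)$ and the $\kappa$-saturation of $\mathcal{K}_1^*$ and $\mathcal{K}_2^*$, the $L_{\textnormal{rv}}$-reducts $(\mathbf{k}_i^*, \Gamma_i^*; c_i^*)$ for $i=1,2$ are $\kappa$-saturated and elementarily equivalent. The standard uniqueness of saturated structures of the same cardinality then yields an $L_{\textnormal{rv}}$-isomorphism $\iota : (\mathbf{k}_2^*, \Gamma_2^*; c_2^*) \to (\mathbf{k}_1^*, \Gamma_1^*; c_1^*)$ that restricts to the identity on $(\mathbf{k}_1, \Gamma_1; c_1)$.

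Next I would feed the pair consisting of the identity map $\mathcal{K}_1 \to \mathcal{K}_1$ (viewed as a partial map from $\mathcal{K}_2^*$ to $\mathcal{K}_1^*$) together with the isomorphism $\iota$ on the residue/value-group sorts of $\mathcal{K}_2^*$ into the Equivalence Theorem~\ref{eqthm}. A back-and-forth extension in the sense of that theorem, exploiting $\kappa$-saturation on both sides, should then produce the desired full $L_3(\textnormal{ac})$-isomorphism $\Phi : \mathcal{K}_2^* \to \mathcal{K}_1^*$ extending the given data.

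The main obstacle is verifying that the starting data fits the precise hypotheses of Theorem~\ref{eqthm}: such an equivalence theorem typically requires a ``good'' common $L_3(\textnormal{ac})$-substructure, closed under certain operations and compatible with the maps $\pi$, $v$, $c$, and $\textnormal{ac}$. One must check that $\mathcal{K}_1$, viewed simultaneously inside $\mathcal{K}_1^*$ and $\mathcal{K}_2^*$ via the inclusions, is of the required form, and that $\iota$ genuinely lifts to an admissible extension at the residue/value level. This verification is essentially diagram-chasing between the three sorts and is forced by the definitions of $\textnormal{ac}$ and $c$; once in place, the reduction to Theorem~\ref{eqthm} is routine.
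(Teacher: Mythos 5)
Your instinct to reduce everything to Theorem~\ref{eqthm} is right, but the route you take both overshoots what that theorem gives you and misses how directly it applies. The paper's proof is a two-line observation: since $\mathcal{K}_1\subseteq\mathcal{K}_2$, the triple $(K_1,\mathbf{k}_1,\Gamma_1)$ is a good substructure of $\mathcal{K}_1$ \emph{and} of $\mathcal{K}_2$ (good substructures are not required to be small, nor to satisfy $\pi(\mathcal{O}_E)=\mathbf{k}_E$ or $v(E^\times)=\Gamma_E$), and the identity on it is a good map $\mathbf{E}_1\to\mathbf{E}_2$: conditions (4) and (5) hold because $\textnormal{ac}_2$, $v_2$ restrict to $\textnormal{ac}_1$, $v_1$ on $K_1$, and condition (6) is \emph{exactly} the hypothesis $(\mathbf{k}_1,\Gamma_1;c_1)\preccurlyeq_{L_{\textnormal{rv}}}(\mathbf{k}_2,\Gamma_2;c_2)$. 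Theorem~\ref{eqthm} then says this identity map is partial elementary between $\mathcal{K}_1$ and $\mathcal{K}_2$, and a partial elementary map whose domain is all of $\mathcal{K}_1$ and which is the identity is literally the statement $\mathcal{K}_1\preccurlyeq\mathcal{K}_2$. No saturation, no elementary extensions, no isomorphism is needed; the verification you defer to the last paragraph as ``diagram-chasing'' is in fact the entire proof, and once done it makes your scaffolding superfluous.

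The genuine gap in your write-up is the step ``a back-and-forth extension in the sense of that theorem \dots should then produce the desired full $L_3(\textnormal{ac})$-isomorphism $\Phi:\mathcal{K}_2^*\to\mathcal{K}_1^*$.'' Theorem~\ref{eqthm} as stated only asserts that good maps are partial elementary maps; it does not assert that a good map between saturated models of the same cardinality extends to an isomorphism. To get $\Phi$ you would have to re-run the back-and-forth system from the \emph{proof} of the theorem, keeping track of the smallness condition at each stage, and in addition you would need the existence of $\kappa$-saturated elementary extensions of cardinality $\kappa$, which is not a ZFC-free fact for arbitrary $\kappa>|\mathcal{K}_2|$ (one needs, e.g., $\kappa^{<\kappa}=\kappa$, or a switch to special models). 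Even granting all that, the isomorphism is overkill: the good map consisting of the identity on $K_1$ together with your $\iota$ would already be partial elementary between $\mathcal{K}_2^*$ and $\mathcal{K}_1^*$ by the theorem, which suffices. So the argument can be repaired, but as written the key step is unjustified, and the repaired version is a long detour around an application of Theorem~\ref{eqthm} that works verbatim on the original structures.
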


\noindent
Here ``$\subseteq$'' means ``substructure of''. 
We also derive a relative quantifier reduction result. This uses a technical notion of {\it special formula} whose definition can be found in Section 5.
\begin{theoremintro}
Every $L_3(\textnormal{ac})$-formula is $T$-equivalent to a special $L_3(\textnormal{ac})$-formula.
\end{theoremintro}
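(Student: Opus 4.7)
The plan is to derive this relative quantifier reduction from the Equivalence Theorem \ref{eqthm} by the standard model-theoretic route. I would first reduce to a semantic statement: for any $L_3(\textnormal{ac})$-formula $\varphi(x)$, any two tuples $a_1 \in \mathcal{K}_1$, $a_2 \in \mathcal{K}_2$ from models of $T$ that satisfy exactly the same special $L_3(\textnormal{ac})$-formulas must satisfy the same $\varphi$. A standard compactness argument then extracts a single special formula $T$-equivalent to $\varphi(x)$: one forms the set $\Sigma(x)$ of special formulas implied by $\varphi$ modulo $T$, uses the semantic statement to show $T \cup \Sigma(x) \vdash \varphi(x)$, and invokes compactness together with closure of the class of special formulas under finite conjunctions (and disjunctions, up to the easy Boolean manipulations presumably built into the definition of Section 5).

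To verify the semantic statement I would take $\mathcal{K}_1$ and $\mathcal{K}_2$ sufficiently saturated and aim to apply Theorem \ref{aa}. The notion of special formula should be designed so that agreement of $a_1, a_2$ on special formulas records two separate pieces of information: (i) the quantifier-free $L_3(\textnormal{ac})$-type of $a_i$, equivalently the $L_3(\textnormal{ac})$-isomorphism type of the substructure $\langle a_i\rangle$ generated by $a_i$ in $\mathcal{K}_i$; and (ii) the $L_{\textnormal{rv}}$-type of the induced residue-field and value-group data associated with $a_i$. From (i) I obtain an $L_3(\textnormal{ac})$-iso\-mor\-phism $\phi \colon \langle a_1\rangle \to \langle a_2\rangle$ sending $a_1$ to $a_2$. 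From (ii), using saturation of $\mathcal{K}_2$, I extend $\phi$ to embed $\mathcal{K}_1$ as an $L_3(\textnormal{ac})$-substructure of $\mathcal{K}_2$ in such a way that the $L_{\textnormal{rv}}$-reduct $(\mathbf{k}_1, \Gamma_1; c_1)$ becomes an $L_{\textnormal{rv}}$-elementary substructure of $(\mathbf{k}_2, \Gamma_2; c_2)$; this step is a back-and-forth driven by (ii) and the existence of angular component maps on saturated models established in Section 3. Now Theorem \ref{aa} applies and gives $\mathcal{K}_1 \preccurlyeq \mathcal{K}_2$, so in particular $\mathcal{K}_1 \models \varphi(a_1)$ if and only if $\mathcal{K}_2 \models \varphi(a_2)$, as required.

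The main obstacle is calibrating the notion of special formula so that the special type simultaneously pins down the quantifier-free $L_3(\textnormal{ac})$-type of $a$ and the $L_{\textnormal{rv}}$-type of the associated residue- and value-group data, while remaining closed (up to routine manipulations) under enough Boolean operations for the compactness argument to go through. That calibration is exactly what forces the technical, somewhat ad hoc definition in Section 5. Once the definition is in place, the extension-into-a-saturated-model step is a standard back-and-forth, and the appeal to Theorem \ref{aa} to close out the argument is then immediate.
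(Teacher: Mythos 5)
Your overall frame is right: reduce to the semantic statement that tuples with the same special type have the same type, then extract a single special formula by compactness (special formulas are indeed closed under Boolean combinations, and the paper runs the same Stone/compactness reduction). The problem is the middle step. You propose to realize the situation of Theorem~\ref{aa} by extending the isomorphism $\phi\colon \langle a_1\rangle \to \langle a_2\rangle$ to a global $L_3(\textnormal{ac})$-embedding of $\mathcal{K}_1$ into a saturated $\mathcal{K}_2$ whose image has $L_{\textnormal{rv}}$-elementary residue/value data, and you dismiss this as ``a standard back-and-forth driven by (ii) and the existence of angular component maps on saturated models.'' That step is not standard and is not supplied by the tools you cite: building such an embedding over $\phi$ requires exactly the extension procedures that constitute the proof of the Equivalence Theorem (arranging the residue field, the $c$-condition, $\d$-henselianity via spherically complete immediate extensions, torsion-free and prime-torsion value-group extensions normalized so that $\textnormal{ac}=1$, etc.), while the Section~3 result only produces \emph{some} angular component map on an $\aleph_1$-saturated model and says nothing about extending an embedding compatibly with the \emph{given} maps $\textnormal{ac}_1,\textnormal{ac}_2$. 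As written, the step is unsupported, and carrying it out would amount to re-proving the hard part of Theorem~\ref{eqthm} in a ``forth-only'' form, plus cardinality bookkeeping, only to then invoke Theorem~\ref{aa}, which is itself a corollary of \ref{eqthm}.

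The missing idea is that no embedding is needed at all: Theorem~\ref{eqthm} is stated for \emph{partial} maps. As in the paper, from the tuples $(a_i,d_i,\gamma_i)$ one forms the good substructures $\mathbf{E}_i=(E_i,\mathbf{k}_{\mathbf{E}_i},\Gamma_{\mathbf{E}_i})$ with $E_i=\qq\langle a_i\rangle$, $\Gamma_{\mathbf{E}_i}$ generated by $v_i(E_i^\times)$ and $\gamma_i$, and $\mathbf{k}_{\mathbf{E}_i}=\qq\langle \textnormal{ac}_i(E_i),c_i(\Gamma_{\mathbf{E}_i}),d_i\rangle$; agreement on quantifier-free special formulas yields the maps $f,f_r,f_v$ satisfying (4)--(5), and agreement on \emph{all} special formulas (not just quantifier-free ones) is what gives condition (6), that $(f_r,f_v)$ is partial elementary between $(\mathbf{k}_1,\Gamma_1;c_1)$ and $(\mathbf{k}_2,\Gamma_2;c_2)$. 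Theorem~\ref{eqthm} then says this good map is already a partial elementary map between $\mathcal{K}_1$ and $\mathcal{K}_2$, so the tuples have the same type, and your compactness argument finishes the proof. So you should replace the embedding-plus-Theorem~\ref{aa} detour by a direct application of Theorem~\ref{eqthm}; without that replacement, the proposal has a genuine gap at its central step.
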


\noindent
We use this to prove the following for models $\mathcal{K} = (K, \mathbf{k}, \Gamma; \pi, v, c, \textnormal{ac})$ of $T$:

\begin{corintro}\label{bb} If a set $X\subseteq \mathbf{k}^m\times \Gamma^n$ is definable in $\mathcal{K}$, then $X$ is already definable in the $L_{\textnormal{rv}}$-structure $(\mathbf{k}, \Gamma; c)$.
\end{corintro}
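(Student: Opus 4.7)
The plan is to deduce the corollary from the preceding quantifier reduction to special formulas. Let $X\subseteq \mathbf{k}^m\times \Gamma^n$ be defined in $\mathcal{K}$ by an $L_3(\textnormal{ac})$-formula $\phi(\bar{y},\bar{z};\bar{a})$, with $\bar{y}$ of sort r, $\bar{z}$ of sort v, and $\bar{a}$ a parameter tuple drawn from $K\cup\mathbf{k}\cup\Gamma$. By the preceding theorem we may take $\phi$ to be special.

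Next, I would open the Section~5 definition of \emph{special formula} and exploit its expected structural feature: a special formula is a boolean combination of (i) $L_\textnormal{f}$-formulas whose free variables are main-sort, and (ii) $L_{\textnormal{rv}}$-formulas in rv-sort variables, where any additional ``bridge'' data feeding into (ii) enters only through terms of the form $\pi(\tau)$, $v(\tau)$, $\textnormal{ac}(\tau)$ with $\tau$ a main-sort term. Since the only free variables of $\phi$ lie in the rv-sorts, every main-sort term occurring in $\phi$ is closed on the parameters $\bar{a}$ and denotes a fixed element of $K$. Consequently each subformula of type (i) takes a fixed truth value on $\bar{a}$, and each of the rv-sort terms $\pi(\tau)$, $v(\tau)$, $\textnormal{ac}(\tau)$ collapses to a fixed element of $\mathbf{k}$ or $\Gamma$.

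Substituting these values back, $\phi(\bar{y},\bar{z};\bar{a})$ becomes a boolean combination of $L_{\textnormal{rv}}$-formulas in $\bar{y},\bar{z}$ with parameters in $\mathbf{k}\cup\Gamma$, i.e., a formula defining $X$ in the $L_{\textnormal{rv}}$-structure $(\mathbf{k},\Gamma;c)$, as required. The main obstacle is precisely the step that invokes the Section~5 definition: one must verify that the special form genuinely achieves this syntactic separation, so that no main-sort quantifier ``leaks'' into the rv part except via the allowed terms $\pi(\tau),v(\tau),\textnormal{ac}(\tau)$. Once that orthogonality of the sort data is in hand, the conclusion is immediate; it is exactly the payoff of the special-formula apparatus, and also the content that makes Theorem~\ref{aa} available.
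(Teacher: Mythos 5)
Your argument is correct and follows exactly the paper's (implicit) route: apply Theorem~\ref{QuantifierElimination} to replace the defining formula by a special one, then evaluate all main-sort data at the parameters. In fact the actual definition of \emph{special} is even simpler than the structure you hypothesize---a single $L_{\textnormal{rv}}$-formula applied to terms $\textnormal{ac}(P_i(x))$ and $v(Q_j(x))$ with $P_i,Q_j\in\zz\{x\}$, with no separate $L_{\textnormal{f}}$-part and no $\pi$-terms---so the ``orthogonality'' you flag as the main obstacle holds by definition, and substituting the fixed values $\textnormal{ac}(P_i(\bar a))\in\mathbf{k}$ and $v(Q_j(\bar a))\in\Gamma\cup\{\infty\}$ immediately yields an $L_{\textnormal{rv}}$-formula with parameters defining $X$.
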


\begin{corintro}\label{cc}
$\mathcal{K}$ has $\textnormal{NIP}$ if and only if the 2-sorted structure $(\mathbf{k}, \Gamma; c)$ has $\textnormal{NIP}$.
\end{corintro}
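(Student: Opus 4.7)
The forward direction is immediate: the two-sorted structure $(\mathbf{k}, \Gamma; c)$ is a reduct of $\mathcal{K}$ (drop the f-sort and the symbols $\pi, v, \textnormal{ac}$), and NIP is preserved under reducts.

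For the converse, assume $(\mathbf{k}, \Gamma; c)$ has NIP. The plan is to combine Theorem 2 with the expected fact that special formulas are, by design, pullbacks from the $L_{\textnormal{rv}}$-structure through $\textnormal{ac}$ and $v$. More precisely, I expect the definition of ``special'' in Section 5 to guarantee that every free variable of the f-sort in a special formula $\psi$ occurs only inside terms of the form $\textnormal{ac}(P(\cdot))$ or $v(Q(\cdot))$ for differential polynomials $P, Q$ (atomic equalities $R = 0$ on the f-sort being absorbed as $v(R) = \infty$). Thus any special $\psi(x; y)$ may be rewritten as
$$\vartheta\bigl(\textnormal{ac}(P_1(x, y)), \ldots, \textnormal{ac}(P_m(x, y)),\ v(Q_1(x, y)), \ldots, v(Q_n(x, y)),\ x_{\textnormal{rv}},\ y_{\textnormal{rv}}\bigr)$$
for some $L_{\textnormal{rv}}$-formula $\vartheta$, differential polynomials $P_i, Q_j$, and the r- and v-sort components $x_{\textnormal{rv}}, y_{\textnormal{rv}}$ of $x, y$.

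Given this pullback form, the NIP transfer is standard. If some $\phi(x; y)$ had the independence property in $\mathcal{K}$, witnessed by parameter tuples $(a_I)$ and $(b_i)$, then after replacing $\phi$ by its special equivalent the induced tuples $\bigl(\textnormal{ac}(P_j(a_I, b_i)),\, v(Q_k(a_I, b_i)),\, (a_I)_{\textnormal{rv}},\, (b_i)_{\textnormal{rv}}\bigr)$ would witness the independence property for $\vartheta$ in $(\mathbf{k}, \Gamma; c)$, contradicting the hypothesis. The main obstacle is thus the structural claim about special formulas --- that all f-sort occurrences in a special formula are channelled through $\textnormal{ac}$ or $v$ --- which I expect to be built into the definition in Section 5, in analogy with Pas-style results for classical henselian valued fields, so the verification should amount to unwinding the definition rather than substantive new work.
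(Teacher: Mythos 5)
Your proposal is correct and follows essentially the same route as the paper: the forward direction is the reduct observation, and the converse uses the relative quantifier reduction (every formula is $T$-equivalent to a special formula, whose f-variables occur only inside $\textnormal{ac}(P(x))$ and $v(Q(x))$ by definition, with $P(x)=0$ handled via $\textnormal{ac}(P(x))=0$) to push IP witnesses down to $(\mathbf{k},\Gamma;c)$. The only cosmetic difference is that the paper first arranges $\mathcal{K}$ to be sufficiently saturated so that IP is witnessed inside $\mathcal{K}$ itself, a step you leave implicit.
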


\noindent
We also show how to eliminate the angular component maps from Theorem~\ref{aa} and Corollaries~\ref{bb} and ~\ref{cc}.

\section{Preliminaries}\label{sec:pre} 

\medskip\noindent
A differential field is throughout a field of characteristic zero 
with a derivation on it. For an element $f$ of a differential field we
usually denote its derivative with respect to the given derivation by $f'$, and
also set $f^\dagger:= f'/f$ when $f\ne 0$.  

Adopting terminology from \cite{ADAMTT},
a {\em valued differential field\/} is
a differential field $K$ together with a
(Krull) valuation $v: K^{\times} \to \Gamma$ whose
residue field $\mathbf{k}$ 
$:=\mathcal{O}/\smallo$ has characteristic 
zero; here $\Gamma=v(K^\times)$ is 
the value group, and we also let $\mathcal{O}$ denote the valuation ring 
of $v$ with maximal ideal $\smallo$, and let
$$C\ =\ C_K\ :=\ \{f\in K:\ f'=0\}$$ 
denote the constant field of the differential field $K$, and 
for $a,b\in K$ we set $$a\asymp b:\Leftrightarrow v(a)=v(b),\quad a\preceq b:\Leftrightarrow v(a)\ge v(b),\quad a\prec b: \Leftrightarrow v(a) > v(b).$$ 
(If we wish to specify the ambient $K$ or the valuation $v$, we also write $\mathcal{O}_K$ or $\mathcal{O}_v$ to indicate the  valuation ring.)
Let $K$ be a valued differential field as above, and let $\der$ 
be its derivation. We say that $\der$ is {\em small\/} if 
$\der(\smallo)\subseteq \smallo$. In that case, $\der$ is continuous
with respect to the valuation topology on $K$; see \cite[4.4.6]{ADAMTT}.
We say that
$K$ has {\em many constants\/} if $v(C^\times)=\Gamma$.   
We say that
$K$ has {\em few constants\/} if $v(C^\times)=\{0\}$.
Following \cite{C} we call $K$ {\em monotone\/} if $f'\preceq f$ for all $f\in K$.
If $K$ is monotone, then $\der$ is obviously
small. If $K$ has many constants and small derivation, then $K$ is easily seen to be monotone. If 
$K$ is monotone, then so is any valued differential field extension 
with small derivation and the same value group as $K$; see
\cite[6.3.6]{ADAMTT}.

{\em From now on we assume that the derivation
of $K$ is small}. This has the effect (see \cite{C} or
\cite[Lemma 4.4.2]{ADAMTT}) that also 
$\der(\mathcal{O})\subseteq \mathcal{O}$, and so $\der$ induces a derivation
on the residue field; we view $\mathbf{k}$ below as
equipped with this induced derivation, and refer to it as the 
{\em differential residue field of $K$}.

\medskip\noindent
We say that $K$ is {\em differential-henselian\/} (for short: 
{\em $\operatorname{d}$-henselian}) if every differential polynomial 
$P\in \mathcal{O}\{Y\}=\mathcal{O}[Y, Y', Y'',\dots]$ whose reduction
$\overline{P}\in \mathbf{k}\{Y\}$ has total degree $1$ has a zero in 
$\mathcal{O}$. (Note that for ordinary 
polynomials $P\in \mathcal{O}[Y]$ this requirement
defines the usual notion of a henselian valued field, that is, 
a valued field whose valuation ring is henselian as a local ring.) 

If $K$ is $\operatorname{d}$-henselian, then its differential residue field
is clearly {\em linearly surjective}: any linear differential equation
$y^{(n)}+ a_{n-1}y^{(n-1)} + \cdots + a_0y = b$ with coefficients $a_i,b\in  \mathbf{k}$ has a solution in $\mathbf{k}$. This is a key
constraint on our notion of $\operatorname{d}$-henselianity. 
If $K$ is $\operatorname{d}$-henselian, then $\mathbf{k}$ has a {\em lift to $K$}, 
meaning, a differential subfield of $K$
contained in $\mathcal{O}$ that maps isomorphically 
onto $\mathbf{k}$ under the canonical map from $\mathcal{O}$ onto $\mathbf{k}$; see
\cite[7.1.3]{ADAMTT}. Other items from \cite{ADAMTT} that are relevant in this paper
are the following differential analogues of 
Hensel's Lemma and of results due to
Ostrowski/Krull/Kaplansky on valued fields:
\begin{enumerate}
\item[(DV1)] If the derivation of $\mathbf{k}$ is nontrivial, then
$K$ has a spherically complete immediate valued differential field extension 
with small derivation; \cite[6.9.5]{ADAMTT}. 
\item[(DV2)] If $\mathbf{k}$ is linearly surjective and 
$K$ is spherically complete, then $K$ is $\operatorname{d}$-henselian;
\cite[7.0.2]{ADAMTT}.


\item[(DV3)] If $\mathbf{k}$ is linearly surjective and $K$ is monotone, 
then any two spherically complete immediate monotone
valued differential field extensions of $K$ are isomorphic over $K$; 
\cite[7.4.3]{ADAMTT}.
\end{enumerate}

\noindent
We also need a model-theoretic variant of (DV3): 

\begin{enumerate} 
\item[(DV4)] Suppose $\mathbf{k}$ is linearly surjective and $K$ is monotone.
Let $K^{\bullet}$ be a spherically complete immediate
valued differential field extension of $K$ with small derivation. Then  $K^{\bullet}$ can be embedded
over $K$ into any $|v(K^\times)|^+$-saturated $\d$-henselian monotone valued differential
field extension of $K$; \cite[7.4.5]{ADAMTT}.
\end{enumerate}
The list (DV1)--(DV4) almost repeats a similar list in \cite{TH}, but the assumption here that the derivation of
$K^{\bullet}$ in (DV4) is small was inadvertently omitted
there. The assumption there in (DV4) that $v(K^\times)\ne \{0\}$ is unnecessary and is dropped here.

\medskip\noindent
Consider 3-sorted structures
$$\mathcal{K}\ =\ (K, \mathbf{k}, \Gamma; \pi, v, c)$$
where $K$ and $\mathbf{k}$ are differential fields, $\Gamma$ is an ordered abelian group,
$v:K^\times \to \Gamma$ is a valuation which makes $K$ into a monotone valued differential field,
$\pi:\mathcal{O} \to \mathbf{k}$ with $\mathcal{O} := \mathcal{O}_v$ is a surjective differential ring morphism,
$c:\Gamma \to \mathbf{k}$ is an additive map satisfying
$\forall \gamma \exists x\ne 0 \big( v(x) = \gamma \ \&\  \pi(x^\dagger) = c(\gamma)\big)$.
We construe these $\mathcal{K}$ as $L_3$-structures for our $3$-sorted language $L_3$. We have an obvious set $\textnormal{Mo}(c)$ of $L_3$-sentences whose models are exactly these $\mathcal{K}$.

\begin{lem} \label{PropertyOfCWithDagger} Let $\mathcal{K} \models \Mo(c)$ be as above and $b\in K^\times$. Then $\pi(b^\dagger) = \pi(a^\dagger) + c(v(b))$ for some $a\asymp 1$ in $K$.
\end{lem}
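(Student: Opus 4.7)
The plan is to exploit the defining axiom for $c$ directly: it guarantees, for every $\gamma \in \Gamma$, some nonzero element $x \in K$ whose valuation is $\gamma$ and whose logarithmic derivative reduces to $c(\gamma)$. Applying this to $\gamma := v(b)$ produces an $x \in K^\times$ with $v(x) = v(b)$ and $\pi(x^\dagger) = c(v(b))$.

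Next, I would set $a := b/x$, so that $v(a) = v(b) - v(x) = 0$, i.e.\ $a \asymp 1$. The multiplicativity of the logarithmic derivative — $(uv)^\dagger = u^\dagger + v^\dagger$ for $u,v \in K^\times$, a direct consequence of the Leibniz rule — then yields $b^\dagger = a^\dagger + x^\dagger$ in $K$.

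Before applying $\pi$, I need to check that the logarithmic derivatives appearing here land in $\mathcal{O}$ so that $\pi$ acts meaningfully. This is exactly where the monotone hypothesis is used: for any $y \in K^\times$ one has $v(y') \ge v(y)$, hence $v(y^\dagger) \ge 0$, so $y^\dagger \in \mathcal{O}$. In particular $a^\dagger, x^\dagger, b^\dagger \in \mathcal{O}$. Applying the (additive) ring homomorphism $\pi$ to the identity $b^\dagger = a^\dagger + x^\dagger$ then gives $\pi(b^\dagger) = \pi(a^\dagger) + \pi(x^\dagger) = \pi(a^\dagger) + c(v(b))$, as required.

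There is no real obstacle here; the lemma is essentially an unpacking of the axiom defining $c$ together with the multiplicativity of~$\dagger$. The only point deserving any care is verifying that $\pi$ can be applied to each logarithmic derivative, which is handled uniformly by monotonicity.
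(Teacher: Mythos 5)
Your proof is correct and is essentially the paper's own argument: take $x$ from the axiom for $c$ with $x\asymp b$ and $\pi(x^\dagger)=c(v(b))$, set $a:=b/x$, and use multiplicativity of $\dagger$. The paper leaves implicit the point you spell out (monotonicity ensuring $y^\dagger\in\mathcal{O}$ so that $\pi$ applies), which is a fine addition but not a new idea.
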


\begin{proof} Take $x\in K^\times$ with $x\asymp b$ and $\pi(x^\dagger) = c(v(b))$. Then $a := b / x$ works.
\end{proof}

\section{Angular components}

\noindent
Let $\mathcal{K} = (K, \mathbf{k}, \Gamma; \pi, v, c) \models \Mo(c)$.

\medskip\noindent
An {\bf angular component map\/} on $\mathcal{K}$ is a map 
$ \textnormal{ac} : K^\times \to \mathbf{k}^\times$ such that
\begin{enumerate}[font=\normalfont]
\item $ \textnormal{ac}$ is a multiplicative group morphism and $ \textnormal{ac}(a) = \pi(a)$ for all $a \asymp 1$,
\item $\textnormal{ac}(a^\dagger) = \textnormal{ac}(a)^\dagger + c(v(a))$ for all $a\in K^\times$ with $a' \asymp a$,
\item $\textnormal{ac}(d)^\dagger = -c(v(d))$ for all $d\in C^\times$.
\end{enumerate}

\noindent
We extend such a map to all of $K$ by $\textnormal{ac}(0) = 0\in \mathbf{k}$, so (3) then becomes the equality
in (2) for $a'=0$ instead of $a'\asymp a$. Note that if $\mathcal{K}$ has few constants, then (1) and (2) together  imply (3). If $\mathcal{K}$ has many constants, then this notion of angular component map is easily seen to agree with that in \cite[Section 8.1]{ADAMTT}.

Examples are Hahn differential fields $\mathbf{k}((t^\Gamma))_c$ (the differential residue field being identified with $\mathbf{k}$ in the usual way), with 
the angular component map given by $\textnormal{ac}(a) = a_{\gamma_0}$ for non-zero $a = \sum a_\gamma t^\gamma$ and $\gamma_0 = v(a)$. 

More generally, let $s: \Gamma \to K^\times$ be a cross-section with 
$\pi(s(\gamma)^\dagger)=c(\gamma)$ for all $\gamma$. We claim that this yields an angular component
map $\textnormal{ac}$ on $\mathcal{K}$ by 
$$\displaystyle \textnormal{ac}(a)\ :=\ \pi\Big(\frac{a}{s(v(a))}\Big) \text{ for }a\in K^\times.$$
Condition (1) is obviously satisfied. As to condition (2), first note that $\textnormal{ac}(s(\gamma)) = 1$ for $\gamma \in \Gamma$. Next, 
let $a\in K^\times$ and $a' \asymp a$. Then $a = s(v(a)) u$ where $u\asymp 1$ and hence ac$(a) = \pi(u)$. Also,
$$\displaystyle \pi(a^\dagger)\ =\ \pi\Big(s(v(a))^\dagger + u^\dagger \Big)\ =\ \pi\Big(s(v(a))^\dagger\Big) + \pi(u^\dagger)\ =\ c(v(a))+\pi(u^\dagger).$$
Using $a^\dagger \asymp 1$ and $u \asymp 1$, this yields ac$(a^\dagger) = c(v(a)) + \pi(u) ^ \dagger = \textnormal{ac}(a)^\dagger + c(v(a))$. As to (3), let $d\in C^\times$, so $d=u\cdot s(v(d))$ with $u\asymp 1$, hence $\textnormal{ac}(d)= \textnormal{ac}(u)\cdot\textnormal{ac}(s(v(d)))$ with $\textnormal{ac}(s(v(d))=1$, and thus 
$$\textnormal{ac}(d)^\dagger\ =\ \textnormal{ac}(u)^\dagger\ =\ \pi(u)^\dagger\ =\ \pi(u^\dagger).$$ 
Morever, $0=d^\dagger=u^\dagger + s(v(d))^\dagger$, so $u^\dagger=-s(v(d))^\dagger$, and thus $$\pi(u^\dagger)\ =\ -\pi\big(s(v(d))^\dagger\big)\ =\ -c(v(d)).$$ Therefore, $\textnormal{ac}(d)^\dagger=-c(v(d))$, as claimed.   
This leads to the following:

\begin{cor}\label{buildac}
Suppose the model $\mathcal{K}$ of $\Mo(c)$ is 
$\aleph_1$-saturated. Then there exists an angular component map on $\mathcal{K}$. 
\end{cor}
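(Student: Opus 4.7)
The plan is to produce a cross-section $s : \Gamma \to K^\times$ (i.e.\ a group homomorphism with $v \circ s = \mathrm{id}_\Gamma$) satisfying $\pi(s(\gamma)^\dagger) = c(\gamma)$ for every $\gamma \in \Gamma$; the construction preceding the corollary then yields the desired angular component map via $\mathrm{ac}(a) = \pi\bigl(a/s(v(a))\bigr)$, so this reduces the problem.

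I would build $s$ by Zorn's lemma applied to the poset of partial such cross-sections $(H, s_H)$, with $H \leq \Gamma$ a subgroup and $s_H$ satisfying the required properties on $H$, ordered by extension. Unions furnish upper bounds for chains, so a maximal element $(H, s_H)$ exists, and it remains to derive $H = \Gamma$. Given $\gamma \in \Gamma \setminus H$, Case 1 ($\mathbb{Z}\gamma \cap H = 0$) is handled by picking $x$ with $v(x) = \gamma$ and $\pi(x^\dagger) = c(\gamma)$ via the $\Mo(c)$-axiom and extending $s_H$ by $\delta + k\gamma \mapsto s_H(\delta)\, x^k$, contradicting maximality. In Case 2 ($n\gamma \in H$ for some minimal $n \geq 2$) one needs $x \in K^\times$ with $x^n = s_H(n\gamma)$; once such $x$ is found, $v(x) = \gamma$ and $\pi(x^\dagger) = c(\gamma)$ follow automatically, using torsion-freeness of $\Gamma$ and characteristic zero of $\mathbf{k}$.

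The crux is Case 2, where $\aleph_1$-saturation enters. The strategy is to have the Zorn construction always choose new values $s_H(\beta)$ that are \emph{coherently divisible}: $s_H(\beta)$ should realize the countable partial type asserting $s_H(\beta) = w_m^m$ for some $w_m \in K^\times$ of valuation $\beta/m$ and dagger-residue $c(\beta/m)$, simultaneously for every $m \in \mathbb{N}$ with $\beta/m \in \Gamma$. That type is finitely satisfiable via the identity $m_1\Gamma \cap m_2\Gamma = \mathrm{lcm}(m_1,m_2)\,\Gamma$ (valid in any torsion-free abelian group, by a Bezout argument): for any finite $\{m_1,\dots,m_k\}$ with $\beta/m_i \in \Gamma$, one has $\beta/\mathrm{lcm}(m_i) \in \Gamma$, and any witness $w$ of that valuation and correct dagger-residue (supplied by $\Mo(c)$) provides each $m_i$-th root as $w^{\mathrm{lcm}(m_i)/m_i}$. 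By $\aleph_1$-saturation the type is realized, producing the coherent value. When Case 2 later triggers, $s_H(n\gamma)$ is already $n$-divisible with a witness of valuation $\gamma$ and dagger-residue $c(\gamma)$, which yields the required $x$.

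The principal obstacle is ensuring the coherent-divisibility invariant survives the multiplicativity forced by the group-homomorphism condition: the identity $s_H(\beta_1 + \beta_2) = s_H(\beta_1) s_H(\beta_2)$ means many values appear as products, and $(\beta_1 + \beta_2)/m \in \Gamma$ need not entail $\beta_i/m \in \Gamma$. The fix is to run the Zorn-style extension along a well-ordering of $\Gamma$ in which the pure hull of each new element is adjoined in a single coherently chosen step: whenever a new $\gamma$ is introduced, one extends $H$ first to its pure hull by applying the coherent-divisibility saturation argument directly to $\gamma$ and to the multiples that could later be needed, rather than letting products in $H$ force derived values. With that organizational care the Zorn argument closes and the resulting cross-section $s$ delivers the angular component map.
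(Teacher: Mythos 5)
Your opening reduction is the same as the paper's: produce a cross-section $s:\Gamma\to K^\times$ with $\pi(s(\gamma)^\dagger)=c(\gamma)$ and feed it into the construction preceding the corollary. But the paper obtains $s$ differently: it works with the definable subgroup $G=\{a\in K^\times:\pi(a^\dagger)=c(va)\}$, notes that $H=\mathcal{O}^\times\cap G$ is pure in $G$ and (being definable in the $\aleph_1$-saturated $\mathcal{K}$) $\aleph_1$-saturated, and invokes the theorem that an $\aleph_1$-saturated pure subgroup is a direct summand, which splits $1\to H\to G\to\Gamma\to 1$ at once. You instead attempt a hands-on Zorn construction; your Case 1, the observation that $x^n=s(n\gamma)$ forces $v(x)=\gamma$ and $\pi(x^\dagger)=c(\gamma)$, and the lcm identity with its finite-satisfiability argument are all correct.

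The gap is exactly at the point you call the principal obstacle, and your fix does not close it. The coherent-divisibility type you realize is a condition on the single new value $s(\beta)$: it supplies roots of $s(\beta)$ alone, with witnesses that are not even required to be compatible with one another. What Case 2 later demands are roots of mixed products $s(\delta)\,s(\beta)^k$ with $\delta$ in the old domain (coming from $n\alpha=\delta+k\beta$), and divisibility of such products is not controlled by divisibility of $s(\beta)$. Your remedy, adjoining ``the pure hull\dots applying the coherent-divisibility saturation argument directly to $\gamma$ and to the multiples that could later be needed,'' still only speaks of roots attached to $\gamma$ and then asserts that the Zorn argument closes; that assertion is the missing proof. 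What is actually needed is to keep the current domain $D$ pure in $\Gamma$ and, when adjoining $\gamma$, to realize in one stroke a countable type in countably many variables describing a full partial section on the pure hull $P$ of $D+\mathbb{Z}\gamma$: since $P/D$ is torsion-free of rank $1$, write $P/D=\bigcup_k\mathbb{Z}\bar\delta_k$ with $\delta_k=m_k\delta_{k+1}+d_k$, $d_k\in D$, and ask for $x_k\in K^\times$ with $v(x_k)=\delta_k$, $\pi(x_k^\dagger)=c(\delta_k)$, and $x_k=x_{k+1}^{m_k}s(d_k)$ for all $k$. This type has only countably many parameters even when $D$ is uncountable, is finitely satisfiable directly from the $\Mo(c)$ axiom (pick a witness at the deepest level occurring and push down by taking powers and multiplying by old section values), and is realized by $\aleph_1$-saturation; with this invariant your ``stored divisibility'' is unnecessary and Case 2 never arises. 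As written, the decisive extension step is not carried out, so the proposal is not yet a proof, although the route (a direct saturation construction in place of the paper's appeal to the pure-subgroup/direct-summand theorem) is salvageable along the lines just indicated.
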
 
\begin{proof} This is close to the proof of \cite[Theorem 4.5]{TH} and so we shall be brief. Let $G:=\{a\in K^\times:\ \pi(a^\dagger)=c(va)\}$, a subgroup of $K^\times$, definable in $\mathcal{K}$, with $v(G)=\Gamma$. Then
$H:=\mathcal{O}^\times \cap G$ is a pure subgroup of $G$, and so we get a cross-section $s: \Gamma\to K^\times$ with
$s(\Gamma)\subseteq G$ as in the proof of \cite[Lemma 4.4]{TH}. (The assumption in that Lemma of $\d$-henselianity is superfluous and the reference in its proof to \cite[Corollary 3.3.37]{ADAMTT} should be to \cite[Corollary 3.3.38]{ADAMTT}.)  Then $\pi(s(\gamma)^\dagger)=c(\gamma)$ for all
$\gamma\in \Gamma$. This gives an angular component map
$\textnormal{ac}$ on $\mathcal{K}$ by $\textnormal{ac}(a):=\pi\Big(a/s(v(a))\Big)$ for $a\in K^\times$.
\end{proof}


\section{Equivalence over substructures}
\medskip\noindent
In this section we consider 3-sorted structures 
$$\mathcal{K}\ =\ (K,\mathbf{k},\Gamma; \pi, v, c, \textnormal{ac})$$
where $(K,\mathbf{k},\Gamma; \pi, v, c)\models  \Mo(c)$ and
$\textnormal{ac} : K \to \mathbf{k}$ is an angular component map on  
$(K,\mathbf{k},\Gamma; \pi, v, c)$.
These 3-sorted structures are naturally $L_3(\textnormal{ac})$-structures
where the language $L_3(\textnormal{ac})$ is $L_3$ augmented by a unary
function symbol $\textnormal{ac}$ of sort fr. Let $\Mo(c, \textnormal{ac})$
be the set of $L_3(\textnormal{ac})$-sentences consisting of $\Mo(c)$
and a sentence expressing that $\textnormal{ac}$ is an angular component map
as defined in the previous section. Then these 3-sorted
structures are exactly the models of $\Mo(c, \textnormal{ac})$.
Given $\mathcal{K}$ as above we regard any subfield $E$ of $K$ as a 
valued subfield of $K$, so the valuation ring of such $E$ is 
$\mathcal{O}_E=E\cap \mathcal{O}_v$. 
We say that a differential subfield $E$ of $K$
{\em satisfies the $c$-condition\/} if for all $\gamma \in v(E^\times)$ there is an $x\in E^\times$ such that $v(x) = \gamma$ and $\pi(x^\dagger) = c(\gamma)$.
For example, $K$ satisfies the $c$-condition.

\medskip\noindent
Define a \textbf{good substructure} of $\mathcal{K}$ to be a triple $\mathbf{E} = (E, \mathbf{k}_{\mathbf{E}}, \Gamma_{\mathbf{E}})$ such that
\begin{enumerate}
\item $E$ is a differential subfield of $K$;
\item $\mathbf{k}_\mathbf{E}$ is a differential subfield of $\mathbf{k}$ with ac$(E)\subseteq \mathbf{k}_\mathbf{E}$ (and thus $\pi(\mathcal{O}_E)\subseteq \mathbf{k}_\mathbf{E}$);
\item $\Gamma_\mathbf{E}$ is an ordered abelian subgroup of $\Gamma$, with $v(E^\times)\subseteq \Gamma_\mathbf{E}$ and $c(\Gamma_\mathbf{E})\subseteq \mathbf{k}_\mathbf{E}$.
\end{enumerate}

\noindent
Note that we do {\em not} demand here that 
$\pi(\mathcal{O}_E)= \mathbf{k}_\mathbf{E}$ or $v(E^\times)= \Gamma_\mathbf{E}$. 
For good substructures $\mathbf{E} = (E, \mathbf{k}_\mathbf{E}, \Gamma_\mathbf{E})$ and $\mathbf{F} = (F, \mathbf{k}_\mathbf{F}, \Gamma_\mathbf{F})$ of $\mathcal{K}$ we define $\mathbf{E} \subseteq \mathbf{F}$ to mean that $E\subseteq F$, $\mathbf{k}_\mathbf{E} \subseteq \mathbf{k}_\mathbf{F}$ and $\Gamma_\mathbf{E} \subseteq \Gamma_\mathbf{F}$. Now let $$\mathcal{K}_1\ =\ (K_1,\mathbf{k}_1,\Gamma_1; \pi_1, v_1, c_1, \textnormal{ac}_1), \quad  \mathcal{K}_2\ =\ (K_2, \mathbf{k}_2, \Gamma_2; \pi_2, v_2, c_2, \textnormal{ac}_2)$$
be  models of $\Mo(c, \textnormal{ac})$, set
$\mathcal{O}_1:=\mathcal{O}_{v_1} $ and $\mathcal{O}_2:=\mathcal{O}_{v_2} $,
and let $$\mathbf{E}_1\ =\ (E_1, \mathbf{k}_{\mathbf{E}_1}, \Gamma_{\mathbf{E}_1}), \qquad \mathbf{E}_2\ =\ (E_2, \mathbf{k}_{\mathbf{E}_2}, \Gamma_{\mathbf{E}_2})$$
be good substructures of $\mathcal{K}_1, \mathcal{K}_2$, respectively.
A \textbf{good map} $\mathbf{f}:\mathbf{E}_1 \to \mathbf{E}_2 $ is a triple $\mathbf{f}=(f, f_r, f_v)$ consisting of a differential field isomorphism $f: E_1 \to E_2$, a differential field isomorphism $f_r: \mathbf{k}_{\mathbf{E}_1} \to \mathbf{k}_{\mathbf{E}_2}$, and an ordered group isomorphism $f_v: \Gamma_{\mathbf{E}_1} \to \Gamma_{\mathbf{E}_2}$, such that
\begin{enumerate}

\item[(4)] $f_r (\textnormal{ac}_1(a)) = \textnormal{ac}_2(f(a))$ for all $a\in E_1$;

\item[(5)] $f_v(v_1(a)) = v_2( f(a) )$ for all $a\in E_1^\times$;

\item[(6)] $(f_r, f_v)$ is elementary as a partial map between $(\mathbf{k}_1, \Gamma_1; c_1)$ and $(\mathbf{k}_2, \Gamma_2; c_2)$, in particular, $f_r(c_1(\gamma)) = c_2(f_v(\gamma))$ for all $\gamma \in \Gamma_{\mathbf{E}_1}$.
\end{enumerate}




\noindent
Let  $\mathbf{f}=(f, f_r, f_v):\mathbf{E}_1 \to \mathbf{E}_2 $ be a good map. Then
$$\mathbf{f}^{-1}\ :=\ (f^{-1}, f_r^{-1}, f_v^{-1})\ :\ \mathbf{E}_2 \to \mathbf{E}_1$$
is a good map as well, and
$f(\mathcal{O}_{E_1})=\mathcal{O}_{E_2}$ by (5), so
$f$ is an isomorphism of valued fields. Using also (4) we obtain
$f_r(\pi_1(a))  =  \pi_2(f(a))$ for all $a \in \mathcal{O}_{E_1}$. 
 
We say that a good map $\mathbf{g} = (g, g_r, g_v) : \mathbf{F}_1 \to \mathbf{F}_2$ \textbf{extends} $\mathbf{f}$ if $\mathbf{E}_1\subseteq \mathbf{F}_1$, $\mathbf{E}_2 \subseteq \mathbf{F}_2$, and $g, g_r, g_v$ extend $f, f_r, f_v$, respectively. Note that if a good map $\mathbf{E}_1 \to \mathbf{E}_2$ exists, then $(\mathbf{k}_1, \Gamma_1; c_1) \equiv (\mathbf{k}_2, \Gamma_2; c_2)$ by (6). Our goal is:

\begin{thm}\label{eqthm}
If $\mathcal{K}_1$ and $\mathcal{K}_2$ are $\d$-henselian, then 
any good map $\mathbf{E}_1 \to \mathbf{E}_2$ is a partial elementary map between $\mathcal{K}_1$ and $\mathcal{K}_2$.
\end{thm}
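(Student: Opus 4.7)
The plan is a standard back-and-forth argument. After replacing $\mathcal{K}_2$ by a sufficiently saturated $L_3(\textnormal{ac})$-elementary extension, it suffices to show that every good map $\mathbf{f} = (f, f_r, f_v) : \mathbf{E}_1 \to \mathbf{E}_2$ with $\mathbf{E}_1$ of cardinality less than the saturation can be extended to a good map whose domain contains any given element of $K_1$, $\mathbf{k}_1$, or $\Gamma_1$ (and symmetrically from the $\mathcal{K}_2$ side). The extensions in the residue and value group sorts are immediate: given $\bar a \in \mathbf{k}_1$ or $\gamma \in \Gamma_1$, condition (6) and saturation of $\mathcal{K}_2$ provide an extension of $(f_r, f_v)$ keeping the same $E_1, E_2$, yielding a good map extension.

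The real work is extending by an element $a \in K_1 \setminus E_1$, which I would carry out in three stages. First, reduce to the case where $\pi_1(\mathcal{O}_{E_1}) = \mathbf{k}_{\mathbf{E}_1}$ and $v_1(E_1^\times) = \Gamma_{\mathbf{E}_1}$: for a residue $\bar b \in \mathbf{k}_{\mathbf{E}_1}$ not already realized in $\pi_1(\mathcal{O}_{E_1})$, pick a lift $b \in \mathcal{O}_1$ (chosen, using $\d$-henselianity, so that any differential algebraic relation satisfied by $\bar b$ over $\pi_1(\mathcal{O}_{E_1})$ is matched by a corresponding relation on $b$ over $E_1$), pick a parallel $b^* \in \mathcal{O}_2$ on the other side using Lemma \ref{PropertyOfCWithDagger}-style arithmetic plus the ac-agreement enforced by $f_r$, and adjoin $b, b^*$; for $\gamma \in \Gamma_{\mathbf{E}_1}$ not yet in $v_1(E_1^\times)$, use the $\Mo(c)$ axiom to find $x \in K_1^\times$ with $v_1(x) = \gamma$ and $\pi_1(x^\dagger) = c_1(\gamma)$, and a matching $x^* \in K_2^\times$ with $v_2(x^*) = f_v(\gamma)$ and $\pi_2(x^{*\dagger}) = c_2(f_v(\gamma))$, and adjoin these. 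In both cases the extension is a good map because (4) on units follows from $\pi$-agreement and extends multiplicatively, while (2)--(3) on angular components propagate the agreement from units to all of $K^\times$.

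Second, once $\pi_1$ and $v_1$ surject from $E_1$ onto $\mathbf{k}_{\mathbf{E}_1}$ and $\Gamma_{\mathbf{E}_1}$, invoke (DV1) to pass to a spherically complete immediate monotone extension $E_1^\circ$ of $E_1$ with small derivation, and use (DV4), which applies because $\mathcal{K}_2$ is $\d$-henselian, monotone, and sufficiently saturated, to embed $E_1^\circ$ over $E_1$ into $K_2$. Since the extension $E_1 \subseteq E_1^\circ$ is immediate, conditions (4)--(6) transfer automatically: ac on the new elements is forced by the $\pi$ of their unit parts, and no new residues or values appear. Third, suppose $a \notin E_1^\circ$. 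Since $E_1^\circ$ is spherically complete, hence maximally complete as a valued field, the extension $E_1^\circ \langle a \rangle$ cannot be immediate, so some element of the differential field generated has a residue outside $\mathbf{k}_{\mathbf{E}_1}$ or a value outside $\Gamma_{\mathbf{E}_1}$. Extend $\mathbf{k}_{\mathbf{E}_1}$ or $\Gamma_{\mathbf{E}_1}$ accordingly (via the residue/value group extension step), return to stage one to lift, and iterate; since $a$ together with its derivatives generates a countably-generated differential extension, this terminates once $a$ is in the domain.

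The main obstacle is the first stage: lifting a residue or a value group element while maintaining all clauses of the definition of a good map on both sides simultaneously. This is where $\d$-henselianity is essential, not just for existence of lifts but for matching differential-algebraic types across $\mathbf{k}_\mathbf{E}$ and $E$; without it, a differential relation in the residue field might fail to have a compatible lift to the valued field. A secondary technical point is verifying that the maps produced by (DV4) in stage two are $\textnormal{ac}$-preserving, but this follows because immediate extensions carry a unique angular component map once one is fixed on the base, as the computation in Corollary \ref{buildac} shows.
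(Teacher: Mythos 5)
There is a genuine gap, and it sits exactly where the paper has to work hardest: your ``stage one'' treatment of adjoining a new value $\gamma\in\Gamma_{\mathbf{E}_1}\setminus v_1(E_1^\times)$. You propose to take any $x\in K_1^\times$ with $v_1(x)=\gamma$ and $\pi_1(x^\dagger)=c_1(\gamma)$, a ``matching'' $x^*$ on the other side, and adjoin them, asserting that condition (4) for good maps ``propagates from units to all of $K^\times$.'' That propagation argument (which is what Lemmas \ref{ExtensionLemma1} and \ref{ExtensionLemma3} actually give) only works when every element of the extension factors as an element of $E_1$ times a unit, i.e.\ precisely when no new values appear; in the value-group step it fails by design, and $\textnormal{ac}_1(x)$ is genuinely new data not determined by $\pi$ on units. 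Moreover, nothing in your sketch explains why a valued \emph{differential} field isomorphism $E_1\langle x\rangle\to E_2\langle x^*\rangle$ over $f$ sending $x\mapsto x^*$ should exist: knowing $v_1(x)=\gamma$ and the residue of $x^\dagger$ does not pin down the differential-algebraic or valuation-theoretic type of $x$ over $E_1$. The paper's steps (7) and (8) are exactly the repair: one first fixes a lifting $\iota_1$ of $\mathbf{k}_{\mathbf{E}_1}$ and uses \cite[Lemma 6.1]{TH} (the $\Mo(\ell,c)$ statement $(*)$) to choose $x$ with $x^\dagger=(\iota_1\circ c_1)(\gamma)$ \emph{exactly}, so that $x$ satisfies a known linear differential equation over $E_1$ and \cite[Lemma 3.1.30]{ADAMTT} determines the extension; one then normalizes $\textnormal{ac}_1(x)=1$ by dividing by $\iota_1(\textnormal{ac}_1(x))$, which requires the case analysis $c_1(\gamma)=0$ versus $c_1(\gamma)\neq 0$ to see that $\textnormal{ac}_1(x)$ is a constant of $\mathbf{k}_1$ so that the normalization does not destroy $x^\dagger=(\iota_1\circ c_1)(\gamma)$. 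Your proposal also ignores torsion: if $l\gamma\in v_1(E_1^\times)$ for a prime $l$, then $E_1(x)$ is algebraic over $E_1$, and $x$ must be chosen as an $l$-th root (extracted henselianly, with controlled angular component) of a specific element $b\in E_1^\times$ whose image $f(b)$ then dictates the choice on the other side; this is the paper's step (8), via \cite[Lemma 3.1.28]{ADAMTT} and \cite[Corollary 1.3]{TH}, and it is where the $c$-condition and the auxiliary step (6) --- entirely absent from your plan --- are needed.

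Two smaller points. First, for partial elementarity you need a back-and-forth system, hence saturation of \emph{both} $\mathcal{K}_1$ and $\mathcal{K}_2$ (the paper passes to $\kappa$-saturated elementary extensions of both); saturating only $\mathcal{K}_2$ gives you the forth direction but not the back. Second, your stage-three loop does not literally ``terminate with $a$ in the domain'': as in the paper, one iterates countably often so that $E_1^\infty\langle a\rangle$ becomes an immediate extension of $E_1^\infty$, and only then does a final application of (DV1), (DV3), (DV4) together with Corollary \ref{ExtensionLemma4} produce a good map whose domain contains $a$; the uniqueness statement (DV3) is what lets the two spherically complete immediate extensions be matched, and the ac-preservation there is Corollary \ref{ExtensionLemma4} (your appeal to Corollary \ref{buildac} is not the relevant mechanism). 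These latter issues are repairable with routine care; the missing value-group analysis (types of the adjoined elements, torsion, and ac-normalization) is the substantive gap.
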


\noindent
Towards the proof we establish some lemmas; these do not assume $\d$-henselianity.

\begin{lem} \label{ExtensionLemma1}
Let $\mathbf{f}:\mathbf{E}_1 \to \mathbf{E}_2$ be a good map and suppose $F_1 \supseteq E_1$ and $F_2 \supseteq E_2$ are differential subfields of $K_1$ and $K_2$, respectively, such that $\pi_1(\mathcal{O}_{F_1}) \subseteq \mathbf{k}_{\mathbf{E}_1}$ and $v_1(F_1^\times) = v_1(E_1^\times)$.
Let $g:F_1 \to F_2$ be a valued differential field isomorphism such that $g$ extends $f$ and $f_r(\pi_1(u)) = \pi_2(g(u))$ for all 
$u\asymp 1$ in  $F_1$. 
Then $\textnormal{ac}_1(F_1) \subseteq \mathbf{k}_{\mathbf{E}_1}$ and $f_r(\textnormal{ac}_1(a)) = \textnormal{ac}_2(g(a))$ for all $a\in F_1$, and thus also
$\textnormal{ac}_2(F_2) \subseteq \mathbf{k}_{\mathbf{E}_2}$.
\end{lem}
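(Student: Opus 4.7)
The plan is to reduce everything to elements of $E_1$ via a decomposition $a = bu$ with $b \in E_1^\times$ of the same valuation and $u \asymp 1$, and then apply multiplicativity of $\textnormal{ac}_1$ together with the hypothesis that $g$ matches residues on units.

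First I would handle the trivial case $a = 0$ (using the convention $\textnormal{ac}(0) = 0$) and then fix $a \in F_1^\times$. Since $v_1(F_1^\times) = v_1(E_1^\times)$, I can pick $b \in E_1^\times$ with $v_1(b) = v_1(a)$; setting $u := a/b$ gives $u \asymp 1$ in $F_1$. By multiplicativity and clause (1) of the definition of an angular component map,
\[
\textnormal{ac}_1(a) \;=\; \textnormal{ac}_1(b)\cdot \textnormal{ac}_1(u) \;=\; \textnormal{ac}_1(b)\cdot \pi_1(u).
\]
The first factor lies in $\mathbf{k}_{\mathbf{E}_1}$ because $\mathbf{E}_1$ is a good substructure, so $\textnormal{ac}_1(E_1) \subseteq \mathbf{k}_{\mathbf{E}_1}$; the second lies in $\mathbf{k}_{\mathbf{E}_1}$ by hypothesis, since $u \in \mathcal{O}_{F_1}$ and $\pi_1(\mathcal{O}_{F_1}) \subseteq \mathbf{k}_{\mathbf{E}_1}$. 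This gives the containment $\textnormal{ac}_1(F_1) \subseteq \mathbf{k}_{\mathbf{E}_1}$.

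Next I would apply $f_r$ to the displayed equation and track each factor:
\[
f_r(\textnormal{ac}_1(a)) \;=\; f_r(\textnormal{ac}_1(b))\cdot f_r(\pi_1(u)).
\]
By clause (4) of good map and the fact that $g$ extends $f$, we have $f_r(\textnormal{ac}_1(b)) = \textnormal{ac}_2(f(b)) = \textnormal{ac}_2(g(b))$. By the hypothesis on $g$, $f_r(\pi_1(u)) = \pi_2(g(u))$. Since $g$ is a valued field isomorphism, $g(u) \asymp 1$ in $F_2$, so by clause (1) applied in $\mathcal{K}_2$, $\pi_2(g(u)) = \textnormal{ac}_2(g(u))$. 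Multiplicativity of $\textnormal{ac}_2$ then yields
\[
f_r(\textnormal{ac}_1(a)) \;=\; \textnormal{ac}_2(g(b))\cdot \textnormal{ac}_2(g(u)) \;=\; \textnormal{ac}_2(g(bu)) \;=\; \textnormal{ac}_2(g(a)).
\]

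Finally, the conclusion $\textnormal{ac}_2(F_2) \subseteq \mathbf{k}_{\mathbf{E}_2}$ is immediate: any $a' \in F_2$ has the form $g(a)$ for some $a \in F_1$, and $\textnormal{ac}_2(a') = f_r(\textnormal{ac}_1(a)) \in f_r(\mathbf{k}_{\mathbf{E}_1}) = \mathbf{k}_{\mathbf{E}_2}$. There is essentially no serious obstacle here; the only thing to watch is that one applies clause (1) in both $\mathcal{K}_1$ and $\mathcal{K}_2$ to replace $\textnormal{ac}$ by $\pi$ on units, which is exactly where the hypothesis on $g$ matching residues on units is used.
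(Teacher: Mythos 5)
Your proof is correct and follows essentially the same route as the paper: decompose $a=bu$ with $b\in E_1^\times$ of the same valuation and $u\asymp 1$ (using $v_1(F_1^\times)=v_1(E_1^\times)$), then use multiplicativity of the angular component maps, clause (4) for good maps, and the hypothesis $f_r(\pi_1(u))=\pi_2(g(u))$. The only difference is that you spell out the $a=0$ case and the final containment $\textnormal{ac}_2(F_2)\subseteq \mathbf{k}_{\mathbf{E}_2}$, which the paper leaves implicit.
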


\begin{proof}
Let $a\in F_1$. Then $a = bu$ where $b \in E_1$ and $u\asymp 1$ in $F_1$, so $\textnormal{ac}_1(a) = \textnormal{ac}_1(b) \pi_1(u) \in \mathbf{k}_{\mathbf{E}_1}$.
Thus $f_r(\textnormal{ac}_1(a)) = f_r(\textnormal{ac}_1(b)) f_r(\pi_1(u)) = \textnormal{ac}_2(f(b)) \pi_2(g(u)) = \textnormal{ac}_2(g(a))$.
\end{proof}

\begin{lem} \label{ExtensionLemma3}
Suppose $\mathbf{f}:\mathbf{E}_1 \to \mathbf{E}_2$ is a good map,
$\pi_1(\mathcal{O}_{E_1}) = \mathbf{k}_{\mathbf{E}_1}$, and $F_1\supseteq E_1$, $F_2 \supseteq E_2$ are differential subfields of $K_1$ and $K_2$, respectively, such that $v_1(F_1^\times) = v_1(E_1^\times)$.
Let $g: F_1 \to F_2$ be a valued differential field isomorphism extending $f$, and $g_r: \pi_1(\mathcal{O}_{F_1}) \to \pi_2(\mathcal{O}_{F_2})$ the differential field isomorphism induced by $g$.
Then $\textnormal{ac}_1(F_1) = \pi_1(\mathcal{O}_{F_1})$ and $g_r(\textnormal{ac}_1(a)) = \textnormal{ac}_2(g(a))$ for all $a\in F_1$. Moreover, $\textnormal{ac}_2(F_2) = \pi_2(\mathcal{O}_{F_2})$ and $v_2(F_2^\times)=v_2(E_2^\times)$.
\end{lem}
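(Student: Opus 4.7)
The plan is to reduce everything to the trivial observation that $v_1(F_1^\times)=v_1(E_1^\times)$ lets us decompose any element of $F_1^\times$ as a product of something in $E_1^\times$ and a unit, and then to track angular components through this decomposition using the fact that $\mathbf{f}$ is good together with $\mathbf{k}_{\mathbf{E}_1}=\pi_1(\mathcal{O}_{E_1})$ (which, via $f_r$, also yields $\mathbf{k}_{\mathbf{E}_2}=\pi_2(\mathcal{O}_{E_2})$).

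First I would dispose of the easy clause $v_2(F_2^\times)=v_2(E_2^\times)$: since $g$ is a valued field isomorphism extending the valued field isomorphism $f$, it induces a value group isomorphism that restricts to $f_v$ on $v_1(E_1^\times)$, so $v_2(F_2^\times)=g\text{-image of }v_1(F_1^\times)=g\text{-image of }v_1(E_1^\times)=f_v(v_1(E_1^\times))=v_2(E_2^\times)$.

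Next I would show $\textnormal{ac}_1(F_1)=\pi_1(\mathcal{O}_{F_1})$. For $a\in F_1^\times$, use $v_1(F_1^\times)=v_1(E_1^\times)$ to write $a=bu$ with $b\in E_1^\times$, $v_1(a)=v_1(b)$, and $u\asymp 1$ in $F_1$. Then $\textnormal{ac}_1(a)=\textnormal{ac}_1(b)\pi_1(u)$, and both factors lie in $\pi_1(\mathcal{O}_{F_1})$ since $\textnormal{ac}_1(b)\in \mathbf{k}_{\mathbf{E}_1}=\pi_1(\mathcal{O}_{E_1})\subseteq \pi_1(\mathcal{O}_{F_1})$. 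Conversely, any $x\in\pi_1(\mathcal{O}_{F_1})^\times$ is $\pi_1(y)=\textnormal{ac}_1(y)$ for some $y\in\mathcal{O}_{F_1}^\times$. The same argument applied on the other side (where $\pi_2(\mathcal{O}_{E_2})=f_r(\pi_1(\mathcal{O}_{E_1}))=f_r(\mathbf{k}_{\mathbf{E}_1})=\mathbf{k}_{\mathbf{E}_2}$) shows $\textnormal{ac}_2(F_2)=\pi_2(\mathcal{O}_{F_2})$; alternatively, once the commuting identity below is established, it follows by applying $g_r$ to the first identity and using $g(\mathcal{O}_{F_1})=\mathcal{O}_{F_2}$.

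The one step that requires care is the commuting identity $g_r(\textnormal{ac}_1(a))=\textnormal{ac}_2(g(a))$ for $a\in F_1$. With $a=bu$ as above, apply $g$: $g(a)=g(b)g(u)$ with $g(u)\asymp 1$, so $\textnormal{ac}_2(g(a))=\textnormal{ac}_2(g(b))\,\pi_2(g(u))$. Good-map condition (4) gives $\textnormal{ac}_2(g(b))=\textnormal{ac}_2(f(b))=f_r(\textnormal{ac}_1(b))$, and by definition of $g_r$ we have $\pi_2(g(u))=g_r(\pi_1(u))$. On the other hand $g_r(\textnormal{ac}_1(a))=g_r(\textnormal{ac}_1(b))\,g_r(\pi_1(u))$, so it suffices to check $g_r(\textnormal{ac}_1(b))=f_r(\textnormal{ac}_1(b))$. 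Here is the only place the hypothesis $\mathbf{k}_{\mathbf{E}_1}=\pi_1(\mathcal{O}_{E_1})$ is essential: pick $y\in\mathcal{O}_{E_1}$ with $\pi_1(y)=\textnormal{ac}_1(b)$; then $g_r(\pi_1(y))=\pi_2(g(y))=\pi_2(f(y))=f_r(\pi_1(y))$, using that $g$ extends $f$ and that $f_r\circ\pi_1=\pi_2\circ f$ on $\mathcal{O}_{E_1}$. This coherence check between $g_r$ and $f_r$ on $\mathbf{k}_{\mathbf{E}_1}$ is the one subtle point of the proof; everything else is routine bookkeeping.
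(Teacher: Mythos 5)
Your proof is correct and follows essentially the same route as the paper: decompose $a=bu$ with $b\in E_1^\times$ and $u\asymp 1$, use multiplicativity of $\textnormal{ac}$ together with good-map condition (4) and $f_r\circ\pi_1=\pi_2\circ f$ on $\mathcal{O}_{E_1}$. The only difference is that you spell out the step the paper dismisses as ``clear'' (that $g_r$ extends $f_r$, which indeed needs $\mathbf{k}_{\mathbf{E}_1}=\pi_1(\mathcal{O}_{E_1})$) and the ``Moreover'' clauses, which the paper leaves to the reader.
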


\begin{proof}
Let $a\in F_1^\times$. Then $a = bu$ where $a\asymp b \in E_1^\times$ and $1 \asymp u \in \mathcal{O}_{F_1}$,  
so $\textnormal{ac}_1(a) = \textnormal{ac}_1(b) \pi_1(u) \in \pi_1(\mathcal{O}_{F_1})$.
It is clear that $g_r$ extends $f_r$.
Thus \begin{align*}
g_r(\textnormal{ac}_1(a))\ &=\ g_r(\textnormal{ac}_1(b))g_r(\pi_1(u))\ =\ f_r(\textnormal{ac}_1(b))\pi_2( g(u))\ =\ \textnormal{ac}_2(f(b)) \textnormal{ac}_2(g(u))\\ &=\ \textnormal{ac}_2(g(b)) \textnormal{ac}_2(g(u))\ =\ \textnormal{ac}_2(g(bu))\ =\ 
\textnormal{ac}_2(g(a)),
\end{align*}
as claimed. 
\end{proof}

\noindent
With the assumptions of the lemma above, $g_r$ extends $f_r$, 
$(F_1,\pi_1(\mathcal{O}_{F_1}), \Gamma_{\mathbf{E}_1})$ and  
$(F_2,\pi_2(\mathcal{O}_{F_2}), \Gamma_{\mathbf{E}_2})$ are good substructures 
of $\mathcal{K}_1$ and $\mathcal{K}_2$, respectively, and
$$\mathbf{g}\ =\ (g, g_r, f_v)\ :\ (F_1,\pi_1(\mathcal{O}_{F_1}), \Gamma_{\mathbf{E}_1}) \to (F_2,\pi_2(\mathcal{O}_{F_2}), \Gamma_{\mathbf{E}_2})$$ 
extends $\mathbf{f}$ and satisfies conditions (4) and (5) for good maps.

\begin{cor} \label{ExtensionLemma4}
Suppose $\mathbf{f}:\mathbf{E}_1 \to \mathbf{E}_2$ is a good map, $\pi_1(\mathcal{O}_{E_1}) = \mathbf{k}_{\mathbf{E}_1}$, and $F_1\supseteq E_1$ and $F_2 \supseteq E_2$ are differential subfields of $K_1$ and $K_2$, respectively, and are immediate extensions of $E_1$ and $E_2$, respectively. Let $g: F_1 \to F_2$ be a valued differential field isomorphism extending $f$. Then $\mathbf{g} = (g, f_r, f_v)$ is a good map that extends $\mathbf{f}$.
\end{cor}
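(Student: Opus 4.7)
The plan is to reduce the statement directly to Lemma~\ref{ExtensionLemma3}; the immediacy hypothesis is exactly what is needed to force the residue field data to be preserved without enlargement.

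First, I would record the consequences of immediacy: since $F_i$ is an immediate extension of $E_i$, we have $\pi_i(\mathcal{O}_{F_i}) = \pi_i(\mathcal{O}_{E_i})$ and $v_i(F_i^\times) = v_i(E_i^\times)$ for $i = 1,2$. Combined with the hypothesis $\pi_1(\mathcal{O}_{E_1}) = \mathbf{k}_{\mathbf{E}_1}$, and with $\pi_2(\mathcal{O}_{E_2}) = \mathbf{k}_{\mathbf{E}_2}$ (which follows because $f_r$ is a bijection onto $\mathbf{k}_{\mathbf{E}_2}$ and $f_r \circ \pi_1 = \pi_2 \circ f$ on $\mathcal{O}_{E_1}$), we obtain $\pi_i(\mathcal{O}_{F_i}) = \mathbf{k}_{\mathbf{E}_i}$.

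Next, I would apply Lemma~\ref{ExtensionLemma3} to $g:F_1 \to F_2$. The lemma gives an induced differential field isomorphism $g_r : \pi_1(\mathcal{O}_{F_1}) \to \pi_2(\mathcal{O}_{F_2})$, i.e.\ $g_r : \mathbf{k}_{\mathbf{E}_1} \to \mathbf{k}_{\mathbf{E}_2}$; since $g_r$ extends $f_r$ and the domains coincide, $g_r = f_r$. The lemma also yields $\textnormal{ac}_1(F_1) \subseteq \pi_1(\mathcal{O}_{F_1}) = \mathbf{k}_{\mathbf{E}_1}$ (and analogously on the right), together with the compatibility $f_r(\textnormal{ac}_1(a)) = \textnormal{ac}_2(g(a))$ for all $a \in F_1$. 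This is precisely condition (4) for $\mathbf{g}$.

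Finally, I would verify the remaining items. For condition (5): given $a \in F_1^\times$, write $a = bu$ with $b \in E_1^\times$ and $u \asymp 1$; then $v_1(a) = v_1(b)$ lies in $v_1(E_1^\times) \subseteq \Gamma_{\mathbf{E}_1}$, and since $g$ is a valued field isomorphism, $v_2(g(a)) = v_2(f(b)) = f_v(v_1(b)) = f_v(v_1(a))$. Condition (6) is inherited from $\mathbf{f}$ since $(f_r,f_v)$ has not changed. To confirm that $(F_1, \mathbf{k}_{\mathbf{E}_1}, \Gamma_{\mathbf{E}_1})$ is a good substructure of $\mathcal{K}_1$, the only nontrivial clause is $\textnormal{ac}_1(F_1) \subseteq \mathbf{k}_{\mathbf{E}_1}$, which we just established; the value group clause uses $v_1(F_1^\times) = v_1(E_1^\times) \subseteq \Gamma_{\mathbf{E}_1}$, and similarly on the $\mathcal{K}_2$ side. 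There is no real obstacle here: once immediacy is invoked, the result is a direct corollary, and everything to check is packaged into Lemma~\ref{ExtensionLemma3}.
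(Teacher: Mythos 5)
Your proposal is correct and follows exactly the route the paper intends: the paper's entire proof is the remark that the corollary ``follows by verifying the hypotheses of Lemma~\ref{ExtensionLemma3},'' and you carry out precisely that verification, using immediacy to get $v_1(F_1^\times)=v_1(E_1^\times)$ and $\pi_1(\mathcal{O}_{F_1})=\mathbf{k}_{\mathbf{E}_1}$, so that the induced map $g_r$ coincides with $f_r$. Nothing further is needed.
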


\noindent
This follows by verifying the hypotheses of Lemma~\ref{ExtensionLemma3}. 

\subsection*{Proof of Theorem~\ref{eqthm}} Assume $\mathcal{K}_1$ and
$\mathcal{K}_2$ are $\d$-henselian, and let 
$$\mathbf{f}\ =\ (f, f_r, f_v)\ :\ \mathbf{E}_1 \to \mathbf{E}_2$$ be a good map. We have to show that this is a 
partial elementary map between  $\mathcal{K}_1$ and
$\mathcal{K}_2$. The case 
$\Gamma_1 = \{0\}$ is a routine exercise, so assume $\Gamma_1 \neq \{0\}$.

By passing to suitable elementary extensions of $\mathcal{K}_1$ and $\mathcal{K}_2$ we arrange that both are $\kappa$-saturated, where $\kappa$ is an uncountable cardinal such that $|\mathbf{k}_{\mathbf{E}_1}|, |\Gamma_{\mathbf{E}_1}| < \kappa$.
We call a good substructure $\mathbf{E} = (E, \mathbf{k}_{\mathbf{E}}, \Gamma_\mathbf{E})$ of $\mathcal{K}_1$ \textbf{small} if $|\mathbf{k}_\mathbf{E}|, |\Gamma_\mathbf{E}| < \kappa$. We prove that the good maps whose domain $\mathbf{E}$ is small (such as the above $\mathbf{f}$) form a back-and-forth system from $\mathcal{K}_1$ to $\mathcal{K}_2$. This is enough to establish the theorem. 
We now present 8 procedures to extend a good map $\mathbf{f}$ as above:

\medskip\noindent
(1) \textit{Given $d \in \mathbf{k}_1$, arranging that $d \in \mathbf{k}_{\mathbf{E}_1}$.} This can be done by saturation without changing $f, f_v, E_1, \Gamma_{\mathbf{E}_1}$ by extending $f_r$ to a map with domain $\mathbf{k}_{\mathbf{E}_1}\<d\>$ which together with $f_v$ gives a partial elementary map between $(\mathbf{k}_1, \Gamma_1; c_1)$ and $(\mathbf{k}_2, \Gamma_2; c_2)$.

\medskip\noindent
(2) \textit{Given $\gamma \in \Gamma_1$, arranging that $\gamma\in \Gamma_{\mathbf{E}_1}$.} We can assume $c(\gamma) \in \mathbf{k}_{\mathbf{E}_1}$ by (1). Without changing $f, f_r$, $E_1$, $\mathbf{k}_{\mathbf{E}_1}$ we then use saturation as in (1) to extend $f_v$ to a map with domain $\Gamma_{\mathbf{E}_1} + \zz\gamma$ which together with
$f_r$ is a partial elementary map between $(\mathbf{k}_1, \Gamma_1; c_1)$ and $(\mathbf{k}_2, \Gamma_2; c_2)$.

\medskip\noindent
(3) \textit{Arranging $\mathbf{k}_{\mathbf{E}_1} = \pi_1( \mathcal{O}_{E_1})$.} Let $d\in \mathbf{k}_{\mathbf{E}_1}$ and $d\not \in \pi_1(\mathcal{O}_{E_1})$. Set $e = f_r(d)$. There are two possibilities:

(i) $d$ is $\d$-transcendental over $\pi_1(\mathcal{O}_{E_1})$. Then $e$ is $\d$-transcendental over $\pi_2(\mathcal{O}_{E_2})$. Take $a\in \mathcal{O}_1$ and $b\in \mathcal{O}_2$ with $\pi_1(a) = d$ and $\pi_2(b) = e$. Then by \cite[Lemma 6.3.1]{ADAMTT} we have $v_1(E_1^\times) = v_1(E_1 \< a \>^\times)$ and we get an isomorphism $g: E_1 \< a \> \to E_2 \< b \>$ of valued differential fields which extends $f$ and sends $a$ to $b$. Therefore by Lemma \ref{ExtensionLemma1}, $\mathbf{g} = (g, f_r, f_v)$ is a good map between $(E_1 \< a \>, \mathbf{k}_{\mathbf{E}_1}, \Gamma_{\mathbf{E}_1})$ and $(E_2 \< b \>, \mathbf{k}_{\mathbf{E}_2}, \Gamma_{\mathbf{E}_2})$ and it extends $\mathbf{f}$.

(ii) $d$ is $\d$-algebraic over $\pi_1(\mathcal{O}_{E_1})$.
Take a minimal annihilator $\bar{P} \in \pi_1 (\mathcal{O}_{E_1})\{Y\}$ of $d$ over $\pi_1 (\mathcal{O}_{E_1})$. Note that applying $f_r$ to the coefficients of $\bar{P}$ yields a minimal annihilator of $e$ over $\pi_2 (\mathcal{O}_{E_2})$.
Next, take $P\in \mathcal{O}_{E_1}\{Y\}$ such that applying
$\pi_1$ to the coefficients of $P$ yields $\bar{P}$ and such
that $P$ has the same complexity as $\bar{P}$. Now $\mathcal{K}_1$ is $\d$-henselian, so we obtain $a\in \mathcal{O}_1$ with $\pi_1(a) = d$ and $P(a)=0$. As in the proof of \cite[Lemma 7.1.4]{ADAMTT} one shows that $P$ is then a minimal annihilator of $a$ over $E_1$. Applying $f$ to the coefficients of $P$ yields $fP\in \mathcal{O}_{E_2}\{Y\}$, and as $\mathcal{K}_2$ is $\d$-henselian we obtain likewise an element $b\in \mathcal{O}_2$ with $\pi_2(b) = e$ and $fP(b)=0$; then $fP$ is again a minimal annihilator of $b$ over $E_2$. An argument in the beginning of the proof of \cite[Theorem 6.3.2]{ADAMTT} yields $v_1(E_1\< a \>^\times) = v_1(E_1^\times)$,  $v_2(E_2\< b \>^\times) = v_2(E_2^\times)$, and the uniqueness
part of that theorem then gives us a valued differential field isomorphism $g: E_1\<a\>\to E_2\<b\>$ that extends $f$ and sends $a$ to $b$; the same theorem also gives
$\pi_1(\mathcal{O}_{E_1\< a \>})=  \pi_1(\mathcal{O}_{E_1})\<d\>$ and $\pi_2(\mathcal{O}_{E_2\< b \>})=  \pi_2(\mathcal{O}_{E_2})\<e\>$.   
Using Lemma \ref{ExtensionLemma1} this yields a good map $\mathbf{g} = (g, f_r, f_v)$ extending $\mathbf{f}$ with small domain $(E_1\< a \>, \mathbf{k}_{E_1}, \Gamma_{E_1} )$.

\medskip\noindent
By iterating the extension procedures in (i) and (ii) we complete step (3), that is, arrange $\mathbf{k}_{\mathbf{E}_1} = \pi_1(\mathcal{O}_{E_1})$. 

\medskip\noindent
(4) \textit{Arranging that $\mathbf{k}_{\mathbf{E}_1}$ is linearly surjective and $\mathbf{k}_{\mathbf{E}_1} = \pi_1(\mathcal{O}_{E_1})$}. This is done by first iterating (1) and then applying (3).

\medskip\noindent
(5) \textit{Arranging that $E_1$ is $\d$-henselian as a valued differential field and $\mathbf{k}_{\mathbf{E}_1} = \pi_1(\mathcal{O}_{E_1})$.}
First apply (4) to arrange that
$\mathbf{k}_{\mathbf{E}_1}$ is linearly surjective and $\mathbf{k}_{\mathbf{E}_1} = \pi_1(\mathcal{O}_{E_1})$.
Next, use (DV1)--(DV4) to pass to spherically complete immediate extensions of $E_1$ and $E_2$ inside $K_1$ and $K_2$, and use Corollary \ref{ExtensionLemma4}.

\medskip\noindent
(6) \textit{Arranging that $E_1$ satisfies the $c$-condition.} 
Let $\gamma \in v_1(E_1^\times)$. Take $b \in E_1^\times$ with $v_1(b) = \gamma$. Then Lemma \ref{PropertyOfCWithDagger} gives $a \asymp 1$ in $K_1^\times$ with $\pi_1(b^\dagger) = \pi_1(a^\dagger) + c_1(\gamma)$. Using (1) we arrange $\pi_1(a) \in \mathbf{k}_{\mathbf{E}_1}$. Next, use (3) to arrange $\mathbf{k}_{\mathbf{E}_1} = \pi_1(\mathcal{O}_{E_1})$. 
Now choose $a^\star \in \mathcal{O}_{E_1}$ with $\pi_1(a^\star) = \pi_1(a)$. Then $\pi_1(a^\dagger) = \pi_1(a)^\dagger = \pi_1(a^\star)^\dagger$, so $\pi_1(b^\dagger) = \pi_1({a^\star})^\dagger + c_1(\gamma)$. Thus $x:=b/a^\star$ satisfies $v_1(x)=\gamma$ and $\pi_1(x^\dagger)=c_1(\gamma)$. This takes care of a single
$\gamma$, and doing the above iteratively we can deal with
all $\gamma\in v_1(E_1^\times)$, preserving $|\mathbf{k}_{\mathbf{E}_1}| < \kappa$; this process does not change
$\Gamma_{\mathbf{E}_1}$.

\medskip\noindent
In steps (1)--(6) the value group $v_1(E_1^\times)$ does not change, so if the first field $E_1$ of the domain $\mathbf{E}_1$ of our good map $\mathbf{f}$ satisfies the $c$-condition, then so does the first field of the domain of the extension of $\mathbf{f}$ constructed in each of (1)--(6).


In steps (7) and (8) below we assume that the domain $\mathbf{E}_1$ of our good map $\mathbf{f}$ has the following properties: $E_1$ is $\d$-henselian, $E_1$ satisfies the $c$-condition, and $\mathbf{k}_{\mathbf{E}_1} = \pi_1(\mathcal{O}_{E_1})$. Note that then
the codomain $\mathbf{E}_2$ has the corresponding properties. In view of these properties of $\mathbf{E}_i$ we have
$$ \big(E_i, \mathbf{k}_{\mathbf{E}_i}, v_i(E_i^\times);\
\pi_i|_{\mathcal{O}_{E_i}}, v_i|_{E_i^\times}, c_i|_{v_i(E_i^\times)}\big)\models \Mo(c) \qquad(i=1,2).$$
In (7) and (8) below we construct models of the theory $\Mo(\ell, c)$ defined in the beginning of
 \cite[Section 5]{TH}. Concerning $\Mo(\ell,c)$, we shall use \cite[Lemma 6.1]{TH}, which we slightly reformulate here for the convenience of the reader and because the statement in \cite{TH} has misprints:

\medskip\noindent 
 {\em Suppose $(K, \mathbf{k}, \Gamma; \pi, v, c)\models \Mo(c)$ is $\d$-henselian, and 
 $\iota: \mathbf{k}\to K$ is a lifting of the differential residue field $\mathbf{k}$ to $K$, that is, a differential field embedding with image in $\mathcal{O}$ such that $\pi(\iota(x))=x$ for all $x\in \mathbf{k}$. Then
 $\Big(\big(K,\iota(\mathbf{k})\big), \Gamma; v,\iota\circ c\Big)\models \Mo(\ell, c)$}.

\medskip\noindent
(7) \textit{Towards $v_1(E_1^\times) = \Gamma_{\mathbf{E}_1}$; the case of no torsion modulo $v_1(E_1^\times)$.}
Suppose $\gamma \in \Gamma_{\mathbf{E}_1}$ has no torsion modulo $v_1(E_1^\times)$, that is, $n\gamma \not \in v_1(E_1^\times)$ for all $n \geq 1$.
Take a lifting $\iota_1$ of the differential residue field $\mathbf{k_{\mathbf{E}_1}}$ to $E_1$. Then
$$ \iota_2\ :=\ f\circ \iota_1\circ f_r^{-1}\ :\ \mathbf{k_{\mathbf{E}_2}}\to E_2$$ is 
a lifting of the differential residue field $\mathbf{k_{\mathbf{E}_2}}$ to $E_2$. The proof of
\cite[Proposition 7.1.3]{ADAMTT} shows how to extend $\iota_i$ to a lifting, to be denoted also by $\iota_i$, of $\mathbf{k}_i$ to $K_i$ for $i = 1, 2$. Then by the above formulation of \cite[Lemma 6.1]{TH}, 
$$(*)\qquad \qquad\Big(\big(K_i, \iota_i(\mathbf{k}_i)\big), \Gamma_i; v_i, \iota_i \circ c_i\Big) \models \Mo(\ell, c) \qquad(i = 1, 2).\quad$$
It follows that we can take $a\in K_1^\times$ such that $v_1(a) = \gamma$ and $a^\dagger = (\iota_1 \circ c_1)(\gamma)$. We distinguish two cases:

\medskip\noindent
(i)$\ c_1(\gamma) = 0$. Then $a \in C_{K_1}$ and so $\textnormal{ac}_1(a)^\dagger = - c_1(\gamma) = 0$. Replace $a$ with $a/\iota_1(\textnormal{ac}_1(a))$.

\medskip\noindent
(ii)$\ c_1(\gamma) \neq 0$. Then $a' \asymp a$, so $\textnormal{ac}_1(a^\dagger) = \textnormal{ac}_1(a)^\dagger + c_1(\gamma)$. On the other hand, $\textnormal{ac}_1(a^\dagger) = \pi_1(a^\dagger) = c_1(\gamma)$.
So $\textnormal{ac}_1(a)^\dagger = 0$ and we replace $a$ with $a/\iota_1(\textnormal{ac}_1(a))$.

\medskip\noindent
In both cases the updated $a$ still satisfies $v_1(a) = \gamma$ and  $a^\dagger = (\iota_1 \circ c_1)(\gamma)$;
in addition, we have $\textnormal{ac}_1(a) = 1$.
Note that $a$ is transcendental over $E_1$ and $P(a) = 0$ where $$P(Y)\ :=\ Y' - (\iota_1 \circ c_1)(\gamma)Y\ \in\ \mathcal{O}_{E_1} \{ Y \}.$$
In the same way, we get $b \in K_2^\times$ with $v_2(b) = f_v(\gamma)$, $b^\dagger = (\iota_2 \circ c_2)(f_v(\gamma))$ and $\textnormal{ac}_2(b) = 1$. Then $b$ is transcendental over $E_2$ and $P^f(b) = 0$ where the differential polynomial $P^f(Y)\in \mathcal{O}_{E_2}\{Y\}$ is given by 
$$P^f(Y)\ :=\ Y' - (f\circ\iota_1 \circ c_1)(\gamma)Y\ 
=\ Y' - (\iota_2\circ f_r \circ c_1)(\gamma)Y 
=\ Y' - (\iota_2 \circ c_2)(f_v(\gamma))Y.$$
Then \cite[Lemma 3.1.30]{ADAMTT} gives 
$$v_1(E_1(a)^\times)\ =\ v_1(E_1^\times) + \Z\gamma\ \subseteq\ \Gamma_{\mathbf{E}_1}, \qquad \pi_1(\mathcal{O}_{E_1(a)})\ =\ \pi_1(\mathcal{O}_{E_1}).$$ It also yields a valued field isomorphism $g: E_1(a) \to E_2(b)$ extending $f$ with $g(a) = b$. Note that $g$ is in addition a differential field isomorphism. It is now routine to verify that $(g, f_r, f_v)$ is a good map with domain $(E_1(a), \mathbf{k}_{\mathbf{E}_1}, \Gamma_{\mathbf{E}_1})$. Using that $E_1$ satisfies the $c$-condition, it follows easily that $E_1(a)$ does as well. 

Next we pass to immediate extensions of $E_1(a)$ and $E_2(b)$ using (DV1)--(DV4) and appeal to Corollary~\ref{ExtensionLemma4} to obtain a good map 
$(h, f_r, f_v)$ extending $(g, f_r, f_v)$ whose domain
$(F_1, \mathbf{k}_{\mathbf{E}_1}, \Gamma_{\mathbf{E}_1})$
is such that $F_1$ is $\d$-henselian, $F_1$ satisfies the $c$-condition, and $\mathbf{k}_{\mathbf{E}_1} = \pi_1(\mathcal{O}_{F_1})$. What we have gained is that
$\gamma\in v_1(F_1^\times)$.

\medskip\noindent
(8) \textit{Towards $v_1(E_1^\times) = \Gamma_{\mathbf{E}_1}$; the case of prime torsion modulo $v_1(E_1^\times)$.}
Suppose $\gamma \in \Gamma_{\mathbf{E}_1} \setminus v_1(E_1^\times)$ and $l\gamma \in v_1(E_1^\times)$ where $l$ is a prime number.
Let $\iota_1$ and $\iota_2$ be as in (7).
Using $(*)$ as in (7) we obtain $a \in K_1^\times$ such that $v_1(a) = \gamma$, $a^\dagger = (\iota_1 \circ c_1)(\gamma)$, and $\textnormal{ac}_1(a) = 1$.
From \cite[Lemma 6.1]{TH} as formulated above we also get
$$(**) \qquad \Big(\big(E_i, \iota_i(\mathbf{k}_{\mathbf{E}_i})\big), v_i(E_i^\times);\ v_i|_{E_i^\times}, \iota_i\circ c_i|_{v_i(E_i^\times)}\Big)\models \Mo(\ell, c) \qquad(i=1,2).$$
This yields an element $b\in E_1^\times$ such that $v_1(b) = l\gamma$ and $b^\dagger = l(\iota_1 \circ c_1)(\gamma)$,
and as in (7) we can arrange in addition that $\textnormal{ac}_1(b) = 1$.
Our next aim is to find $d \in K_1^\times$ such that $d^l = b$ and $\textnormal{ac}_1(d) = 1$. To this end we 
consider $P(Y) := Y^l - b/a^l \in \mathcal{O}_1[Y]$. From $\textnormal{ac}_1(b/a^l) = 1$ and $b / a^l \asymp 1$ we get $v_1(1 - b/a^l) > 0$, that is, $v_1(P(1)) > 0$. Moreover, $P'(1) = l$. By henselianity we get $u\in K_1$ such that $P(u) = 0$ and $v_1(u - 1) > 0$. Setting $d: = a u\in K_1^\times$, we have 
$$d^l\ =\ b, \qquad \pi_1(d^\dagger)\ =\ c_1(\gamma), \qquad v_1(d)\ =\ \gamma, \qquad \textnormal{ac}_1(d)\ =\ \textnormal{ac}_1 (a)\textnormal{ac}_1(u)\ =\ 1.$$ 
Now $v_2(f(b))=lf_v(\gamma)$, $f(b)^\dagger=l(\iota_2\circ c_2)(f_r(\gamma))$, and $ac_2(f(b))=1$, so in the same way we constructed $d$, we find $e \in K_2^\times$ such that 
$$e^l\ =\ f(b), \qquad \pi_2(e^\dagger)\ =\ c_2(f_r(\gamma)),\qquad
v_2(e)\ =\ f_r(\gamma), \qquad  \textnormal{ac}_2(e)\ =\ 1.$$ Then \cite[Lemma 3.1.28]{ADAMTT} gives us an isomorphism $g: E_1(d) \to E_2(e)$ of valued fields extending $f$ and sending $d$ to $e$. 
This isomorphism is also a differential field isomorphism. Using that same lemma it is routine to check that 
$\mathbf{g}:=(g, f_r, f_v)$ is a good map with domain $(E_1(d), \mathbf{k}_{\mathbf{E}_1}, \Gamma_{\mathbf{E}_1})$. 
Using that $E_1$ satisfies the $c$-condition and $\pi(d^\dagger)=c_1(\gamma)$, it follows that $E_1(d)$ satisfies the $c$-condition. 

Unlike in (7) we do not need to extend further to immediate extensions of $E_1(d)$ and $E_2(e)$ to regain $\d$-henselianity: By \cite[Corollary 1.3]{TH},
$E_1(d)$ and $E_2(e)$ are $\d$-henselian. What we have gained is that $\gamma\in v_1(E_1(d)^\times)$.


\bigskip\noindent
Now let any $a\in K_1$ be given; we need to extend $\mathbf{f}$ to a good map with small domain and with $a$ in its domain.
Using (1)--(8) we arrange that $E_1$ is $\d$-henselian, $E_1$ satisfies the $c$-condition, $\mathbf{k}_{\mathbf{E}_1} = \pi_1(\mathcal{O}_{E_1})$, and $v_1(E_1^\times) = \Gamma_{\mathbf{E}_1}$. Using that $E_1\<a\>$ has countable
transcendence degree over $E_1$ it follows from \cite[Lemma 3.1.10]{ADAMTT} that
$|\pi_1(\mathcal{O}_{E_1 \< a \>})| < \kappa$ and $ | v_1(E_1 \< a \> ^\times) | < \kappa$.
Using again (1)--(8) we extend $\mathbf{f}$ to a good map $\mathbf{f}_1 = (f_1, f_{1,r}, f_{1,v})$ with small domain $\mathbf{E}_1^1 = (E_1^1, \mathbf{k}_{\mathbf{E}_1^1}, \Gamma_{\mathbf{E}_1^1})$ such that $E_1^1$ is $\d$-henselian, satisfies the $c$-condition, and 
$$\pi_1(\mathcal{O}_{E_1 \< a \>})\ \subseteq\ \mathbf{k}_{\mathbf{E}_1^1}\ =\ \pi_1(\mathcal{O}_{E^1_1}), \qquad v_1(E_1 \< a \> ^\times)\ \subseteq\ \Gamma_{\mathbf{E}_1^1}\ =\ v_1((E_1^1)^\times).$$
Next we extend $\mathbf{f}_1$ in the same way to $\mathbf{f}_2 = (f_2, f_{2,r}, f_{2,v})$ with small domain $\mathbf{E}_1^2 = (E_1^2, \mathbf{k}_{\mathbf{E}_1^2}, \Gamma_{\mathbf{E}_1^2})$ such that $E_1^2$ is $\d$-henselian, $E_1^2$ satisfies the $c$-condition, and 
$$\pi_1(\mathcal{O}_{E_1^1 \< a \>})\ \subseteq\ \mathbf{k}_{\mathbf{E}_1^2}\ =\ \pi_1(\mathcal{O}_{ E_1^2}), \qquad v_1(E_1^1 \< a \> ^\times)\ \subseteq\ \Gamma_{\mathbf{E}_1^2}\ =\ v_1((E_1^2)^\times).$$
Continuing in this manner and taking the union of the resulting good maps and small domains, we get a good map $\mathbf{f}_\infty = (f_\infty, f_{\infty,r}, f_{\infty,v})$ with small domain $\mathbf{E}_1^\infty = (E_1^\infty, \mathbf{k}_{\mathbf{E}_1^\infty}, \Gamma_{\mathbf{E}_1^\infty})$ and codomain $\mathbf{E}_2^\infty = (E_2^\infty, \mathbf{k}_{\mathbf{E}_2^\infty}, \Gamma_{\mathbf{E}_2^\infty})$ such that $E_1^\infty$ is $\d$-henselian, $E_1^\infty$ satisfies the $c$-condition,  and
$$\pi_1(\mathcal{O}_{E_1^\infty \< a \>})\ =\ \mathbf{k}_{\mathbf{E}_1^\infty}\  =\ \pi_1(\mathcal{O}_{E_1^\infty}), \qquad v_1(E_1^\infty \< a \> ^\times)\ =\ \Gamma_{\mathbf{E}_1^\infty}\ =\ v_1((E_1^\infty)^\times).$$
Therefore, the differential valued field extension $E_1^\infty \< a \>$ of $E_1^\infty$ is immediate. By (DV1) and (DV4) we have a spherically complete immediate valued differential field extension $E_i^\bullet\subseteq K_i$ of $E_i^\infty \< a \>$ (and thus of $E_i$) for $i=1,2$.  
Then by (DV3) and Corollary~\ref{ExtensionLemma4} we can extend $\mathbf{f}_\infty$ to a good map $\mathbf{f}_\bullet$ with small domain $(E_1^\bullet, \mathbf{k}_{\mathbf{E}_1^\infty}, \Gamma_{\mathbf{E}_1^\infty})$ 
and codomain $(E_2^\bullet, \mathbf{k}_{\mathbf{E}_2^\infty}, \Gamma_{\mathbf{E}_2^\infty})$. It remains to note that $a\in E_1\<a\>\subseteq E_1^\bullet$.

This finishes the proof of the \textit{forth} part. The \textit{back} part is done likewise.

\section{Relative quantifier elimination}

\noindent
In this section we derive various consequences of Theorem \ref{eqthm}. 
Recall from the introduction the 3-sorted languages $L_3$ and $L_3(ac)$ and the 2-sorted language $L_{\textnormal{rv}}$. We also defined there $T$ to be the $L_3(\textnormal{ac})$-theory of d-henselian monotone valued differential fields with angular component map as
defined in Section 3. Let
$$\mathcal{K}_1\ =\ (K_1,\mathbf{k}_1,\Gamma_1;\ \pi_1, v_1, c_1, \textnormal{ac}_1), \qquad  \mathcal{K}_2\ =\ (K_2, \mathbf{k}_2, \Gamma_2;\ \pi_2, v_2, c_2, \textnormal{ac}_2)$$
be models of $T$ considered as $L_3(\textnormal{ac})$-structures.

\begin{cor}
$\mathcal{K}_1 \equiv \mathcal{K}_2$ if and only if $(\mathbf{k}_1, \Gamma_1; c_1) \equiv (\mathbf{k}_2, \Gamma_2; c_2)$ as $L_{\textnormal{rv}}$-structures.
\end{cor}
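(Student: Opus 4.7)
The plan is to reduce the corollary to Theorem~\ref{eqthm} by exhibiting a good map between trivial good substructures. The forward implication is a soft observation, and the backward implication is where the Equivalence Theorem does the work.

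For the forward direction, observe that every $L_{\textnormal{rv}}$-sentence $\varphi$ can be translated into an $L_3(\textnormal{ac})$-sentence $\varphi^\star$ by relativizing its quantifiers to the residue-field and value-group sorts of $L_3$; this uses only that $L_{\textnormal{rv}}$ consists of $L_{\text{r}}$, $L_{\text{v}}$, and the symbol $c$, all of which are already present in $L_3$. Then $\mathcal{K}_i \models \varphi^\star$ if and only if $(\mathbf{k}_i,\Gamma_i;c_i)\models \varphi$, so $\mathcal{K}_1\equiv \mathcal{K}_2$ immediately implies the $L_{\textnormal{rv}}$-equivalence.

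For the backward direction, assume $(\mathbf{k}_1,\Gamma_1;c_1)\equiv (\mathbf{k}_2,\Gamma_2;c_2)$. I would take $E_i\subseteq K_i$ to be the prime differential subfield (so $E_i=\mathbb{Q}$ with trivial derivation), and set $\mathbf{k}_{\mathbf{E}_i}:=\mathbb{Q}$ (the prime differential subfield of $\mathbf{k}_i$) and $\Gamma_{\mathbf{E}_i}:=\{0\}$. Then $\mathbf{E}_i=(E_i,\mathbf{k}_{\mathbf{E}_i},\Gamma_{\mathbf{E}_i})$ is a good substructure of $\mathcal{K}_i$: any nonzero rational is $\asymp 1$, so $\textnormal{ac}_i$ restricted to $E_i^\times$ equals $\pi_i$ and lands in $\mathbb{Q}\subseteq \mathbf{k}_{\mathbf{E}_i}$; and clearly $v_i(E_i^\times)=\{0\}\subseteq \Gamma_{\mathbf{E}_i}$, $c_i(\{0\})=\{0\}\subseteq \mathbf{k}_{\mathbf{E}_i}$.

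Let $\mathbf{f}=(f,f_r,f_v)$ be the triple of identity maps $\mathbf{E}_1\to\mathbf{E}_2$. Conditions (4) and (5) in the definition of a good map hold trivially (ac is $\pi$ on units of $E_i$, and $v$ is zero on $E_i^\times$), and the elementarity condition (6) is exactly the hypothesis applied to the empty tuple: since $(\mathbf{k}_1,\Gamma_1;c_1)\equiv (\mathbf{k}_2,\Gamma_2;c_2)$, the identity on the prime parts $(\mathbb{Q},\{0\})$ is a partial elementary map. Hence $\mathbf{f}$ is a good map. By Theorem~\ref{eqthm}, $\mathbf{f}$ is partial elementary between $\mathcal{K}_1$ and $\mathcal{K}_2$, which in particular yields $\mathcal{K}_1\equiv \mathcal{K}_2$.

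There is no real obstacle here; the only point to watch is verifying the elementarity condition (6) at the base, which reduces precisely to the assumed $L_{\textnormal{rv}}$-equivalence. Once that is observed, the corollary is immediate from the Equivalence Theorem.
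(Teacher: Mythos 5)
Your proof is correct and follows essentially the same route as the paper: the paper also takes the good substructures $(\mathbb{Q},\mathbb{Q};\{0\})$ of $\mathcal{K}_1$ and $\mathcal{K}_2$, notes that the canonical (identity-like) map between them is a good map because of the assumed $L_{\textnormal{rv}}$-equivalence, and then applies Theorem~\ref{eqthm}, with the forward direction dismissed as trivial. You merely spell out the verifications the paper calls ``obvious.''
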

\begin{proof}
Suppose $(\mathbf{k}_1, \Gamma_1; c_1) \equiv (\mathbf{k}_2, \Gamma_2; c_2)$. Then we have good substructures $\mathbf{E}_1 = (\qq, \qq; \{ 0 \})$ of $\mathcal{K}_1$, $\mathbf{E}_2 = (\qq, \qq; \{ 0 \})$ of $\mathcal{K}_2$, and an obviously good map $\mathbf{E}_1 \to \mathbf{E}_2$, so Theorem \ref{eqthm} applies.
The other direction of the corollary is trivial.
\end{proof}

\begin{cor} \label{elementarysubstr}
Let $\mathcal{K}_1$ be a substructure of $\mathcal{K}_2$, with $(\mathbf{k}_1, \Gamma_1; c_1) \preccurlyeq (\mathbf{k}_2, \Gamma_2; c_2)$ as $L_{\textnormal{rv}}$-structures. Then $\mathcal{K}_1 \preccurlyeq \mathcal{K}_2$.
\end{cor}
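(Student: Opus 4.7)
The plan is to apply the Equivalence Theorem \ref{eqthm} directly, with the good substructures taken to be the whole of $\mathcal{K}_1$ viewed inside itself and inside $\mathcal{K}_2$, and with the good map being the identity.

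More concretely, I would set $\mathbf{E}_1 := (K_1, \mathbf{k}_1, \Gamma_1)$ and regard this as a good substructure of $\mathcal{K}_1$; the three defining conditions (differential subfield, angular-component map landing in the residue-field sort, valuation landing in the value-group sort, and $c_1(\Gamma_1) \subseteq \mathbf{k}_1$) are trivially satisfied. Using that $\mathcal{K}_1 \subseteq \mathcal{K}_2$, the same triple $\mathbf{E}_2 := (K_1, \mathbf{k}_1, \Gamma_1)$ is also a good substructure of $\mathcal{K}_2$, because $\pi_2, v_2, c_2, \textnormal{ac}_2$ restrict to $\pi_1, v_1, c_1, \textnormal{ac}_1$ on the relevant sorts.

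Next I would verify that $\mathbf{f} := (\operatorname{id}_{K_1}, \operatorname{id}_{\mathbf{k}_1}, \operatorname{id}_{\Gamma_1}) : \mathbf{E}_1 \to \mathbf{E}_2$ is a good map. Conditions (4) and (5) hold because $\mathcal{K}_1$ is a substructure of $\mathcal{K}_2$, so the angular component and valuation agree on $K_1$. The only non-trivial condition is (6): that $(\operatorname{id}_{\mathbf{k}_1}, \operatorname{id}_{\Gamma_1})$ is an elementary partial map between $(\mathbf{k}_1, \Gamma_1; c_1)$ and $(\mathbf{k}_2, \Gamma_2; c_2)$ as $L_{\textnormal{rv}}$-structures. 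But this is precisely the hypothesis $(\mathbf{k}_1, \Gamma_1; c_1) \preccurlyeq (\mathbf{k}_2, \Gamma_2; c_2)$.

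Having checked that $\mathbf{f}$ is a good map between two models of $T$ (hence $\d$-henselian), Theorem \ref{eqthm} says it is partial elementary between $\mathcal{K}_1$ and $\mathcal{K}_2$. Since the domain of $\mathbf{f}$ is all of $\mathcal{K}_1$, this means exactly that $\mathcal{K}_1 \preccurlyeq \mathcal{K}_2$. There is no real obstacle: the whole content of the corollary is packaged inside Theorem \ref{eqthm}, and the task reduces to observing that the identity on $\mathcal{K}_1$ qualifies as a good map precisely under the stated $L_{\textnormal{rv}}$-elementarity assumption.
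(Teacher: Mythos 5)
Your proposal is correct and is essentially the paper's own argument: the paper likewise takes $(K_1,\mathbf{k}_1,\Gamma_1)$ as a good substructure of both $\mathcal{K}_1$ and $\mathcal{K}_2$, observes that the identity is a good map (with the $L_{\textnormal{rv}}$-elementarity hypothesis supplying condition (6)), and concludes by Theorem~\ref{eqthm}. Your write-up just spells out the verification of the good-substructure and good-map conditions in more detail.
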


\begin{proof} With $(K_1, \mathbf{k}_1, \Gamma_1)$ in the role of a good substructure of $\mathcal{K}_1$ as well as of $\mathcal{K}_2$, the identity on $(K_1, \mathbf{k}_1, \Gamma_1)$ is a good map. Hence $\mathcal{K}_1 \preccurlyeq \mathcal{K}_2$ by Theorem \ref{eqthm}.
\end{proof}

\noindent
To eliminate angular components in Corollary \ref{elementarysubstr}, consider $L_3$-structures
$$\mathcal{E}\ =\ (E,\mathbf{k}_E,\Gamma_E;\ \pi_E, v_E, c_E), \qquad  \mathcal{F}\ =\ (F,\mathbf{k}_F,\Gamma_F;\ \pi_F, v_F, c_F)$$
that are $\d$-henselian models of $\Mo(c)$.

\begin{cor}
Suppose $\mathcal{E}$ is a substructure of $\mathcal{F}$
and $(\mathbf{k}_{E}, \Gamma_{E}; c_E) \preccurlyeq (\mathbf{k}_F, \Gamma_F; c_F)$ as $L_{\textnormal{rv}}$-structures. Then $\mathcal{E} \preccurlyeq \mathcal{F}$.
\end{cor}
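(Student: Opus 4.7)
The plan is to reduce to Corollary~\ref{elementarysubstr} by producing $L_3$-elementary extensions $\mathcal{E}\preccurlyeq_{L_3}\mathcal{E}^*$ and $\mathcal{F}\preccurlyeq_{L_3}\mathcal{F}^*$ with $\mathcal{E}^*\subseteq\mathcal{F}^*$ as $L_3$-structures, equipping both with compatible angular component maps, and then invoking the already-established angular component case. To produce such extensions while preserving the substructure inclusion, I would work in the expanded language $L_3^+:=L_3\cup\{P_{\text{f}},P_{\text{r}},P_{\text{v}}\}$ with unary predicates for the three sorts, construe $(\mathcal{F},\mathcal{E})$ as the $L_3^+$-structure in which $P_{\text{f}},P_{\text{r}},P_{\text{v}}$ are interpreted as $E,\mathbf{k}_E,\Gamma_E$, and take an $\aleph_1$-saturated $L_3^+$-elementary extension $(\mathcal{F}^*,\mathcal{E}^*)$. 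Reducing to $L_3$ gives $\mathcal{F}\preccurlyeq_{L_3}\mathcal{F}^*$, and relativizing $L_3$-formulas to the predicates and using elementarity gives $\mathcal{E}\preccurlyeq_{L_3}\mathcal{E}^*$; in particular $\mathcal{E}^*$ and $\mathcal{F}^*$ remain $\d$-henselian models of $\Mo(c)$. The substructure inclusion $\mathcal{E}^*\subseteq\mathcal{F}^*$ is preserved by the universal $L_3^+$-substructure axioms, and the hypothesis $(\mathbf{k}_E,\Gamma_E;c_E)\preccurlyeq_{L_{\textnormal{rv}}}(\mathbf{k}_F,\Gamma_F;c_F)$ is encoded by an $L_3^+$-axiom schema lying in $\Th_{L_3^+}(\mathcal{F},\mathcal{E})$, so it transfers to $(\mathbf{k}_{E^*},\Gamma_{E^*};c_{E^*})\preccurlyeq_{L_{\textnormal{rv}}}(\mathbf{k}_{F^*},\Gamma_{F^*};c_{F^*})$. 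Both $\mathcal{E}^*$ and $\mathcal{F}^*$ are $\aleph_1$-saturated.

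By Corollary~\ref{buildac} applied to $\mathcal{E}^*$, there is a cross-section $s_{E^*}\colon\Gamma_{E^*}\to (E^*)^\times$ with $\pi(s_{E^*}(\gamma)^\dagger)=c(\gamma)$, yielding an angular component map $\textnormal{ac}_{E^*}$ on $\mathcal{E}^*$. The key step is to extend $s_{E^*}$ to a cross-section $s_{F^*}\colon\Gamma_{F^*}\to (F^*)^\times$ with the analogous property on $\mathcal{F}^*$. Writing $G:=\{a\in (F^*)^\times:\pi(a^\dagger)=c(v(a))\}$ and $H:=\mathcal{O}_{F^*}^\times\cap G$, the proof of Corollary~\ref{buildac} shows that $H$ is pure in $G$. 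The $L_{\text{v}}$-elementary inclusion $\Gamma_{E^*}\preccurlyeq\Gamma_{F^*}$ implies $\Gamma_{E^*}$ is pure in $\Gamma_{F^*}$, whence $s_{E^*}(\Gamma_{E^*})\oplus H$ is pure in $G$; repeating the splitting argument of Corollary~\ref{buildac} in the $\aleph_1$-saturated $\mathcal{F}^*$, now relative to this partial splitting, produces $s_{F^*}$ extending $s_{E^*}$. Then $\textnormal{ac}_{F^*}(a):=\pi(a/s_{F^*}(v(a)))$ defines an angular component map on $\mathcal{F}^*$ whose restriction to $(E^*)^\times$ coincides with $\textnormal{ac}_{E^*}$.

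Hence $(\mathcal{E}^*,\textnormal{ac}_{E^*})$ is an $L_3(\textnormal{ac})$-substructure of $(\mathcal{F}^*,\textnormal{ac}_{F^*})$, both are models of $T$, and the $L_{\textnormal{rv}}$-elementary condition between their residue-value sorts holds. Corollary~\ref{elementarysubstr} then yields $(\mathcal{E}^*,\textnormal{ac}_{E^*})\preccurlyeq_{L_3(\textnormal{ac})}(\mathcal{F}^*,\textnormal{ac}_{F^*})$, and so $\mathcal{E}^*\preccurlyeq_{L_3}\mathcal{F}^*$. Chaining $\mathcal{E}\preccurlyeq_{L_3}\mathcal{E}^*\preccurlyeq_{L_3}\mathcal{F}^*$ with $\mathcal{F}\preccurlyeq_{L_3}\mathcal{F}^*$ and the original inclusion $\mathcal{E}\subseteq\mathcal{F}$ (the two routes from $\mathcal{E}$ into $\mathcal{F}^*$ being the same map) forces $\mathcal{E}\preccurlyeq_{L_3}\mathcal{F}$. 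The main technical obstacle is the compatible extension of the cross-section in the previous paragraph, which rests on the purity of $\Gamma_{E^*}$ in $\Gamma_{F^*}$ and on the splitting argument from Corollary~\ref{buildac}.
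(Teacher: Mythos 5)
Your overall strategy is the paper's: pass to an $\aleph_1$-saturated elementary extension of the pair $(\mathcal{F},\mathcal{E})$, build compatible angular component maps from compatible cross-sections, and then quote Corollary~\ref{elementarysubstr}; your treatment of the pair language, the transfer of the $L_{\textnormal{rv}}$-elementarity hypothesis, and the final chase back down to $\mathcal{E}\preccurlyeq\mathcal{F}$ is exactly the (tacit) content of the paper's opening sentence. The one place where you diverge, and where your write-up has a gap as stated, is the compatible cross-section. You propose to split $A:=s_{E^*}(\Gamma_{E^*})\oplus H$ off $G$ ``by repeating the splitting argument of Corollary~\ref{buildac} in the $\aleph_1$-saturated $\mathcal{F}^*$''; but that argument rests on the subgroup being $\aleph_1$-saturated, which in Corollary~\ref{buildac} comes from $H$ being \emph{definable} in the saturated structure. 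Your $A$ involves $s_{E^*}(\Gamma_{E^*})$, which is not definable in $\mathcal{F}^*$, so saturation of $\mathcal{F}^*$ does not directly apply; purity of $A$ in $G$ alone (which you do correctly obtain from purity of $\Gamma_{E^*}$ in $\Gamma_{F^*}$) does not yield a direct summand. The repair is either to note that $A\cong\Gamma_{E^*}\times H$ is a finite direct sum of $\aleph_1$-saturated, hence algebraically compact, groups, so it is algebraically compact and therefore a direct summand of any group containing it purely --- and then to add the short check that for $G=A\oplus B$ the map $v|_B$ is injective with $v(B)$ a complement of $\Gamma_{E^*}$ in $\Gamma_{F^*}$, so that $s_{E^*}$ together with $(v|_B)^{-1}$ gives a section of $v$ extending $s_{E^*}$ --- or, more simply, to do what the paper does: build cross-sections $s_{E^*}$ and $s_{F^*}$ independently by Corollary~\ref{buildac}, split the value group $\Gamma_{F^*}=\Gamma_{E^*}\oplus\Delta$ using \cite[Corollary 3.3.38]{ADAMTT} (here only the definable-in-a-saturated-structure, or rather $\aleph_1$-saturated pure subgroup, situation occurs), and define the glued cross-section to be $s_{E^*}$ on $\Gamma_{E^*}$ and $s_{F^*}$ on $\Delta$. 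With either patch the rest of your argument (restriction of $\textnormal{ac}_{F^*}$ to $E^*$ equals $\textnormal{ac}_{E^*}$, hence an $L_3(\textnormal{ac})$-substructure inclusion of models of $T$, hence elementarity by Corollary~\ref{elementarysubstr}) goes through as in the paper.
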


\begin{proof} By passing to suitable elementary extensions we arrange that $\mathcal{E}$ and $\mathcal{F}$ are 
$\aleph_1$-saturated.
Then the proof of Corollary~\ref{buildac} yields a cross-section
$s_E: \Gamma_E\to E^\times$ such that $\pi_E(s_E(\gamma)^\dagger)=c_E(\gamma)$ for all $\gamma\in \Gamma_E$, and also a cross-section
$s_F: \Gamma_F\to F^\times$ such that $\pi_F(s_F(\gamma)^\dagger)=c_F(\gamma)$ for all $\gamma\in \Gamma_F$. Now $\Gamma_E$ is an $\aleph_1$-saturated pure subgroup of $\Gamma_F$ and thus we have an internal direct sum decomposition $\Gamma_F=\Gamma_E \oplus \Delta$ by \cite[Corollary 3.3.38]{ADAMTT}. This gives a cross-section $s: \Gamma_F \to F^\times$ that agrees with $s_E$ on 
$\Gamma_E$ and with $s_F$ on $\Delta$. Moreover, $\pi_F(s(\gamma)^\dagger)=c_F(\gamma)$ for all $\gamma\in \Gamma_F$. This yields
an angular component map $\textnormal{ac}_F$ on $\mathcal{F}$ by $\textnormal{ac}(x) = \pi_F \big(x/s(v_F(x))\big)$. Its restriction to $\Gamma_E$ is angular component map
 on $\mathcal{E}$. Now use  Corollary \ref{elementarysubstr}.  
\end{proof}

\medskip\noindent
Let $x$ be an $l$-tuple of distinct f-variables, $y$ an $m$-tuple of distinct r-variables, and $z$ an $n$-tuple of distinct v-variables,
Call an $L_3(\textnormal{ac})$-formula $\phi(x, y, z)$ {\em special\/} if $$\phi(x, y, z)\ =\ \psi\big(\textnormal{ac}(P_1(x)), \ldots, \textnormal{ac}(P_p(x)), v(Q_1(x)), \ldots, v(Q_q(x)), y, z \big),$$
for some $L_{\textnormal{rv}}$-formula 
$\psi\big(u_1, \ldots, u_p, w_1,\dots, w_q, y, z\big)$ where $u_1,\dots, u_p$ are extra r-variables, $w_1,\dots, w_q$ are extra v-variables, and where the differential polynomials 
 $P_1, \ldots, P_p, Q_1, \ldots, Q_q \in \qq\{ x \}$ have all their coefficients in $\zz$. Note that a special formula contains no quantified $f$-variables. 

\medskip\noindent
We now state the relative quantifier elimination result.

\begin{thm} \label{QuantifierElimination}
Every $L_3(\textnormal{ac})$-formula $\theta(x, y, z)$ is $T$-equivalent to some special $L_3(\textnormal{ac})$-formula
$\theta^*(x,y,z)$.
\end{thm}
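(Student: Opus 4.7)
The plan is to reduce, via a standard compactness argument, to the following claim: if $\mathcal{K}_1,\mathcal{K}_2\models T$ and tuples $(a_1,b_1,c_1)$ in $\mathcal{K}_1$, $(a_2,b_2,c_2)$ in $\mathcal{K}_2$ (of appropriate sorts) realize the same special $L_3(\textnormal{ac})$-formulas, then they realize the same $L_3(\textnormal{ac})$-formulas. Granting this, let $\Sigma:=\{\sigma(x,y,z)\text{ special}:T\vdash\theta\to\sigma\}$. If some $(a,b,c)\models T\cup\Sigma\cup\{\neg\theta\}$, then by compactness the special type of $(a,b,c)$ is consistent with $T\cup\{\theta\}$; a realization $(a',b',c')$ has the same special type as $(a,b,c)$, so by the claim $(a,b,c)\models\theta$, a contradiction. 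Hence $T\cup\Sigma\vdash\theta$, and since Boolean combinations of special formulas remain special (the $L_{\textnormal{rv}}$-formulas $\psi$ in the definition are closed under Booleans), compactness produces the desired $\theta^*$.

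To prove the claim, I would build a good map $\mathbf{E}_1\to\mathbf{E}_2$ and apply Theorem~\ref{eqthm}. For each $P\in\zz\{x\}$ the formula $\textnormal{ac}(P(x))=0$ is special and $T$-equivalent to $P(x)=0$, so $a_{1,i}\mapsto a_{2,i}$ induces a well-defined differential $\qq$-algebra isomorphism $f:E_1\to E_2$ with $E_i:=\qq\<a_i\>$; the special formulas $v(P(x))\le v(Q(x))$ then make $f$ an isomorphism of valued differential fields.

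Next, let $\mathbf{k}_{\mathbf{E}_i}^{0}$ be the differential subfield of $\mathbf{k}_i$ generated over $\qq$ by the entries of $b_i$ and $\{\textnormal{ac}_i(P(a_i)):P\in\zz\{x\}\}$, and let $\Gamma_{\mathbf{E}_i}$ be the ordered subgroup of $\Gamma_i$ generated by the entries of $c_i$ and $\{v_i(Q(a_i)):Q\in\zz\{x\}\}$. Define $f_r^{0}$ and $f_v$ on generators by sending $\mathcal{K}_1$-data to the corresponding $\mathcal{K}_2$-data. Because substituting $\textnormal{ac}_i(P(a_i))$ for $u$ and $v_i(Q(a_i))$ for $w$ converts any $L_{\textnormal{rv}}$-formula $\psi(u,w,y,z)$ into a special $L_3(\textnormal{ac})$-formula, agreement of special types forces $(f_r^{0},f_v)$ to be a partial elementary map between $(\mathbf{k}_1,\Gamma_1;c_1)$ and $(\mathbf{k}_2,\Gamma_2;c_2)$. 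To obtain a good substructure, I close off by setting $\mathbf{k}_{\mathbf{E}_i}:=\mathbf{k}_{\mathbf{E}_i}^{0}\<c_i(\Gamma_{\mathbf{E}_i})\>$ and extending $f_r^{0}$ to $f_r$ via $c_1(\gamma)\mapsto c_2(f_v(\gamma))$; this preserves elementariness, and $(E_i,\mathbf{k}_{\mathbf{E}_i},\Gamma_{\mathbf{E}_i})$ is a good substructure since $\pi_i$ agrees with $\textnormal{ac}_i$ on elements $\asymp 1$. The triple $(f,f_r,f_v)$ is a good map by construction, so Theorem~\ref{eqthm} delivers the claim.

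The main obstacle is the elementariness step for $(f_r^{0},f_v)$: one must verify that every $L_{\textnormal{rv}}$-formula, including those carrying $L_{\textnormal{rv}}$-quantifiers over $\mathbf{k}_i$ and $\Gamma_i$, in data of the form $\textnormal{ac}_i(P(a_i))$, $v_i(Q(a_i))$, $b_i$, $c_i$ corresponds to a special $L_3(\textnormal{ac})$-formula in $(a_i,b_i,c_i)$. This is precisely the reason the definition of \emph{special} allows arbitrary $L_{\textnormal{rv}}$-formulas $\psi$ (not merely quantifier-free ones) in the extra residue- and value-group variables: the $L_{\textnormal{rv}}$-quantifiers inside $\psi$ are faithfully reflected back to the world of special formulas, so the special type of a tuple already encodes enough information to recover the partial $L_{\textnormal{rv}}$-elementarity needed by Theorem~\ref{eqthm}.
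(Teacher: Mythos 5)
Your proposal is correct and follows essentially the same route as the paper: reduce by a compactness/separation argument to showing that tuples with the same special type realize the same type, then build the good substructures $E_i=\qq\<a_i\>$, $\Gamma_{\mathbf{E}_i}$, $\mathbf{k}_{\mathbf{E}_i}$ from the tuples, check that equality of special types yields a good map with $(f_r,f_v)$ partial elementary, and invoke Theorem~\ref{eqthm}. The only differences are cosmetic: you spell out the compactness step and the closure of special formulas under Boolean combinations, which the paper compresses into an appeal to Stone's Representation Theorem, and you generate $\mathbf{k}_{\mathbf{E}_i}$ in two stages where the paper does it in one.
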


\begin{proof}
For a model $\mathcal{K} = (K, \mathbf{k}, \Gamma; \pi, v, c, \textnormal{ac})$ of $T$ and for a tuple $(a, d, \gamma)$ where $a\in K^l$, $d\in \mathbf{k}^m$ and $\gamma\in \Gamma^n$,
define the {\em special type of $(a, d, \gamma)$} (in $\mathcal{K}$), denoted by $\textnormal{sptp}(a, d, \gamma)$, to be the following set of special formulas:
$$\textnormal{sptp}(a, d, \gamma)\ =\ \{ \phi(x, y, z)\ :\ \phi \textnormal{ is a special } L_3(\textnormal{ac}) \textnormal{-formula and }\mathcal{K} \models \phi(a, d, \gamma) \}.$$
Let $\mathcal{K}_1$ and $\mathcal{K}_2$ be models of $T$ and let $a_i\in K_i^l$, $d_i \in \mathbf{k}_i^m$ and $\gamma_i \in \Gamma_i^n$ for $i = 1, 2$ be such that $(a_1, d_1, \gamma_1)$ and $(a_2, d_2, \gamma_2)$ have the same special type, in $\mathcal{K}_1$ and $\mathcal{K}_2$ respectively.
By Stone's Representation Theorem, it is enough to show that then $(a_1, d_1, \gamma_1)$ and $(a_2, d_2, \gamma_2)$ realize the same type in $\mathcal{K}_1$ and $\mathcal{K}_2$, respectively. 
Consider the differential subfield $E_i := \qq \< a_i \>$ of $K_i$, the ordered subgroup $\Gamma_{\mathbf{E}_i}$ of $\Gamma_i$ generated by $v_i(E_i^\times)$ and $\gamma_i$, and the differential subfield $$\mathbf{k}_{\mathbf{E}_i}\  :=\ \qq \< \textnormal{ac}_i(E_i), c_i(\Gamma_{\mathbf{E}_i}), d_i\>$$ of $\mathbf{k}_i$, for $i = 1, 2$. Then 
$\mathbf{E}_i := (E_i, \mathbf{k}_{\mathbf{E}_i}, \Gamma_{\mathbf{E}_i})$
is a good substructure of $\mathcal{K}_i$, for $i=1,2$.  
Note that for all $P\in \qq\{x\}$ and for $i=1,2$ we have $P(a_i) = 0$ iff 
$\textnormal{ac}_i(P(a_i)) = 0$. Since $a_1$ and $a_2$ have the same special type, this yields a differential field isomorphism $f : E_1 \to E_2$ with $f(a_1) = a_2$. 
It is also routine to show that we have
an ordered group isomorphism $f_v : \Gamma_{\mathbf{E}_1} \to \Gamma_{\mathbf{E}_2}$ such that $f_v(\gamma_1) = \gamma_2$ and $f_v(v_1(a)) = v_2(f(a))$ for all $a \in E_1^\times$, and a differential field isomorphism
$f_r : \mathbf{k}_{\mathbf{E}_1} \to \mathbf{k}_{\mathbf{E}_2}$ with $f_r(d_1) = d_2$, $f_r(c_1(\gamma)) = c_2(f_v(\gamma))$ for all $\gamma \in \Gamma_{\mathbf{E}_1}$ and $f_r(\textnormal{ac}_1(a)) = \textnormal{ac}_2(f(a))$ for all $a \in E_1$. These properties of the maps
$f, f_r, f_v$ only use that the tuples
$(a_1, d_1, \gamma_1)$ and $(a_2, d_2, \gamma_2)$ realize the same {\em quantifier-free\/} special formulas $\phi(x,y,z)$ in $\mathcal{K}_1$ and $\mathcal{K}_2$ respectively, but the full assumption on these two tuples guarantees that $(f_r, f_v)$ is a partial elementary map between $(\mathbf{k}_1, \Gamma_1; c_1)$ and $(\mathbf{k}_2, \Gamma_2; c_2)$.
Thus we have a good map $\mathbf{f} = (f, f_r, f_v)$, and
it remains to apply Theorem \ref{eqthm}.
\end{proof}

\begin{cor}
Let $\mathcal{K} = (K, \mathbf{k}, \Gamma; \pi, v, c, \textnormal{ac}) \models T$. Then any set $X\subseteq \mathbf{k}^m\times \Gamma^n$ that is definable in $\mathcal{K}$ is definable in the $L_{\textnormal{rv}}$-structure $(\mathbf{k}, \Gamma; c)$. In particular, $(\mathbf{k}, \Gamma; c)$ is stably embedded in $\mathcal{K}$. 
\end{cor}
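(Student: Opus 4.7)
The plan is to read this off directly from the relative quantifier elimination result, Theorem \ref{QuantifierElimination}, with only bookkeeping about parameters.

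First I would start with an arbitrary definable $X\subseteq \mathbf{k}^m\times\Gamma^n$, say defined by an $L_3(\textnormal{ac})$-formula $\theta(y,z)$ with parameters that, sorted by type, consist of $a\in K^l$, $d_0\in\mathbf{k}^{m'}$, $\gamma_0\in\Gamma^{n'}$. Treating the $\mathbf{k}$- and $\Gamma$-parameters as extra $y$- and $z$-variables appended to the tuples, the situation reduces to showing: for any $L_3(\textnormal{ac})$-formula $\theta(x,y,z)$ and any $a\in K^l$, the set defined by $\theta(a,y,z)$ in $\mathcal{K}$ is $L_{\textnormal{rv}}$-definable in $(\mathbf{k},\Gamma;c)$ with parameters from $\mathbf{k}\cup\Gamma$.

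Next I would apply Theorem~\ref{QuantifierElimination} to replace $\theta(x,y,z)$ by a $T$-equivalent special formula
\[
\theta^{*}(x,y,z)\ =\ \psi\bigl(\textnormal{ac}(P_1(x)),\ldots,\textnormal{ac}(P_p(x)),\,v(Q_1(x)),\ldots,v(Q_q(x)),\,y,z\bigr),
\]
with $\psi$ an $L_{\textnormal{rv}}$-formula and $P_i,Q_j\in\zz\{x\}$. Substituting $x=a$ turns each $\textnormal{ac}(P_i(a))$ into a specific element $\alpha_i\in\mathbf{k}$ and each $v(Q_j(a))$ into a specific element $\beta_j\in\Gamma\cup\{\infty\}$ (the default value $\infty$ is already built into the language, so this causes no trouble). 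Hence
\[
X\ =\ \bigl\{(d,\gamma)\in\mathbf{k}^m\times\Gamma^n\ :\ (\mathbf{k},\Gamma;c)\models\psi(\alpha_1,\ldots,\alpha_p,\beta_1,\ldots,\beta_q,d,\gamma)\bigr\},
\]
which is exactly $L_{\textnormal{rv}}$-definable in $(\mathbf{k},\Gamma;c)$ using the parameters $\alpha_i\in\mathbf{k}$, $\beta_j\in\Gamma$, together with the original $d_0,\gamma_0$.

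Finally, the ``stably embedded'' clause is now a tautology: the display above shows that every subset of $\mathbf{k}^m\times\Gamma^n$ definable in $\mathcal{K}$ with arbitrary parameters is in fact definable in $(\mathbf{k},\Gamma;c)$ using only parameters from $\mathbf{k}\cup\Gamma$, which is precisely the standard definition of stable embeddedness.

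There is no real obstacle here; the only care needed is the parameter bookkeeping in the first step (absorbing $\mathbf{k}$- and $\Gamma$-parameters into the $y,z$ variables so that special form can be invoked cleanly) and the observation that the substitution of $a$ collapses all $f$-sort content of $\theta^{*}$ into fixed elements of $\mathbf{k}\cup\Gamma$.
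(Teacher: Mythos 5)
Your argument is correct and is exactly the intended one: the paper states this corollary as an immediate consequence of Theorem~\ref{QuantifierElimination}, obtained precisely by substituting the f-sort parameters into a special formula so that the terms $\textnormal{ac}(P_i(a))$ and $v(Q_j(a))$ become parameters in $\mathbf{k}$ and $\Gamma\cup\{\infty\}$. Your parameter bookkeeping (absorbing residue-field and value-group parameters into the $y,z$ variables, and noting the default value $\infty$ is harmless) matches the paper's implicit reasoning, so there is nothing to add.
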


\medskip\noindent
The angular component map does not occur in the above reduct
$(\mathbf{k}, \Gamma; c)$, so we can eliminate it in the result above in view of Corollary~\ref{buildac}:

\begin{cor}
Let $\mathcal{K} = (K, \mathbf{k}, \Gamma; \pi, v, c)$ be a
$\d$-henselian model of $\Mo(c)$. Then any set $X\subseteq \mathbf{k}^m\times \Gamma^n$ that is definable in $\mathcal{K}$ is definable in the $L_{\textnormal{rv}}$-structure $(\mathbf{k}, \Gamma; c)$. 
\end{cor}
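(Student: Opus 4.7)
The plan is to deduce this from Corollary~\ref{bb} by installing an angular component map on an $\aleph_1$-saturated $L_3$-elementary extension of $\mathcal{K}$ via Corollary~\ref{buildac}, and then pulling back the resulting defining formula by elementarity.

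Concretely, let $\phi(y,z;\bar{a})$ be an $L_3$-formula defining $X$ in $\mathcal{K}$, with $\bar{a}$ a tuple of parameters from the various sorts of $\mathcal{K}$. I would first pass to an $\aleph_1$-saturated $L_3$-elementary extension $\mathcal{K}^* \succcurlyeq \mathcal{K}$. By Corollary~\ref{buildac}, $\mathcal{K}^*$ admits an angular component map $\textnormal{ac}^*$, and adjoining it yields an $L_3(\textnormal{ac})$-structure that is a model of $T$. Writing $X^* \subseteq \mathbf{k}^{*m}\times\Gamma^{*n}$ for the set defined by the same formula $\phi(y,z;\bar{a})$ in this enriched structure, Corollary~\ref{bb} then produces an $L_{\textnormal{rv}}$-formula $\chi(y,z;\bar{u})$ and a tuple $\bar{e}^* \in \mathbf{k}^*\cup\Gamma^*$ such that $\chi(y,z;\bar{e}^*)$ defines $X^*$ in the reduct $(\mathbf{k}^*,\Gamma^*;c^*)$.

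The only genuinely non-routine point is that $\bar{e}^*$ need not lie in $\mathbf{k}\cup \Gamma$. To remedy this, I would consider the $L_3$-formula
\[
\Theta(\bar{a})\ :=\ \exists \bar{u}\,\forall y\,\forall z\,\bigl[\phi(y,z;\bar{a}) \leftrightarrow \chi(y,z;\bar{u})\bigr],
\]
where $\bar{u}$ is quantified over the residue- and value-group sorts appropriate for $\chi$; since $\chi$ is an $L_{\textnormal{rv}}$-formula, $\Theta$ is a legitimate $L_3$-formula. Now $\mathcal{K}^* \models \Theta(\bar{a})$, witnessed by $\bar{u} = \bar{e}^*$, so by the elementarity $\mathcal{K}\preccurlyeq \mathcal{K}^*$ in $L_3$ we also have $\mathcal{K}\models \Theta(\bar{a})$. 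This yields $\bar{e}\in \mathbf{k}\cup \Gamma$ in $\mathcal{K}$ making $\chi(y,z;\bar{e})$ equivalent in $\mathcal{K}$ to $\phi(y,z;\bar{a})$, and since $\chi$ is $L_{\textnormal{rv}}$, this exhibits $X$ as definable in the reduct $(\mathbf{k},\Gamma;c)$. I do not expect any serious obstacle beyond verifying that this existential-over-the-reduct trick indeed pushes the parameter down, so that the elementarity step behaves as advertised.
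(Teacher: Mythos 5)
Your argument is correct and is essentially the paper's intended route: the paper dispatches this corollary with the one-line remark that the angular component can be eliminated via Corollary~\ref{buildac} (install an ac map on an $\aleph_1$-saturated elementary extension, apply the ac-version of the result), and your write-up just makes explicit the parameter-descent step via the existential $L_3$-formula $\Theta$ and elementarity, which works as you describe.
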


\section{NIP}

\newcommand{\tikzoverset}[2]{%
  \tikz[baseline=(X.base),inner sep=0pt,outer sep=0pt]{%
    \node[inner sep=0pt,outer sep=0pt] (X) {$#2$}; 
    \node[yshift=1pt] at (X.north) {$#1$};
}}

\newcommand{\harpoon}{\tikzoverset{\rightharpoonup}}

\noindent
Let  $\mathcal{K} = (K, \mathbf{k}, \Gamma; \pi, v, c, \textnormal{ac})$ be a model of $T$. When does $\mathcal{K}$ have NIP (the Non-Independence Property)? This can be reduced to the same question for $(\mathbf{k}, \Gamma; c)$:

\begin{cor}
$\mathcal{K}$ has $\textnormal{NIP}$ if and only if the $L_{\textnormal{rv}}$-structure $(\mathbf{k}, \Gamma; c)$ has $\textnormal{NIP}$. 
\end{cor}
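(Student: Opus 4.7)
My plan is to prove the two directions separately, drawing on Theorem~\ref{QuantifierElimination} and the observation that $(\mathbf{k},\Gamma;c)$ is a reduct of $\mathcal{K}$ in a very transparent sense: its two sorts are already sorts of $\mathcal{K}$, and its language $L_{\textnormal{rv}}$ is contained in $L_3(\textnormal{ac})$. The forward direction is then immediate: if $\mathcal{K}$ has NIP, so does every reduct, including $(\mathbf{k},\Gamma;c)$.

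For the reverse direction, assume $(\mathbf{k},\Gamma;c)$ has NIP. By Theorem~\ref{QuantifierElimination} it suffices to show that every special $L_3(\textnormal{ac})$-formula $\phi(\bar x;\bar y)$ has NIP in $\mathcal{K}$. The defining feature of a special formula is that it has the shape
$$\phi(\bar x;\bar y)\ =\ \psi\bigl(\bar t(\bar x,\bar y)\bigr),$$
where $\bar t(\bar x,\bar y)$ is a tuple whose entries are either terms $\textnormal{ac}(P)$ or $v(Q)$ with $P,Q$ differential polynomials over $\zz$ in the f-variables of $\bar x,\bar y$, or else the r- and v-variables appearing in $\bar x,\bar y$ themselves. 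In particular $\bar t$ lands in $\mathbf{k}\cup\Gamma$, and $\psi$ is an $L_{\textnormal{rv}}$-formula.

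To check NIP of $\phi(\bar x;\bar y)$ I would pass to a monster model of $T$ and use the standard indiscernible-sequence criterion: given any parameter $\bar a$ and any $\bar a$-indiscernible sequence $(\bar b_i)_{i<\omega}$, I set $\bar c_i:=\bar t(\bar a,\bar b_i)$. Since $\bar t(\bar a,\cdot)$ is $\bar a$-definable, $(\bar c_i)_{i<\omega}$ is indiscernible over $\bar a$ in $\mathcal{K}$, hence in particular indiscernible over $\emptyset$ in the reduct $(\mathbf{k},\Gamma;c)$. By hypothesis this reduct has NIP, so the truth value of $\psi(\bar c_i)$ is eventually constant; and this truth value is exactly $\phi(\bar a;\bar b_i)$, which gives NIP of $\phi$.

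There is no serious obstacle: both ingredients — that reducts inherit NIP and that indiscernibility transfers through $\emptyset$-definable functions — are standard, and the nontrivial work has already been done in proving Theorem~\ref{QuantifierElimination}. The only point worth verifying carefully is that the special form really does present $\phi$ as $\psi$ composed with a tuple of terms landing in the $L_{\textnormal{rv}}$-sorts, so that the sequence $(\bar c_i)$ is genuinely a sequence in $\mathbf{k}\cup\Gamma$; this is guaranteed by the definition of \emph{special} recalled in Section~5.
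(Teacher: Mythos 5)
Your forward direction is fine, and reducing the converse to special formulas via Theorem~\ref{QuantifierElimination} is the same first move as in the paper. But the NIP test you then apply is vacuous as stated: if $(\bar b_i)_{i<\omega}$ is indiscernible \emph{over} $\bar a$, then $\phi(\bar a;\bar b_i)$ has the same truth value for all $i$ for every formula $\phi$ whatsoever, since $\phi(\bar a;\bar y)$ is a formula with parameters $\bar a$; so ``eventual constancy for all $\bar a$-indiscernible sequences'' holds trivially and cannot certify NIP. The correct criterion takes $(\bar b_i)$ indiscernible over $\emptyset$ and $\bar a$ arbitrary, and with that correction your argument breaks at the next step: the tuple $\bar c_i=\bar t(\bar a,\bar b_i)$ depends on $\bar a$, so $(\bar c_i)$ need not be indiscernible over $\emptyset$, and you cannot feed it to the NIP hypothesis on $(\mathbf{k},\Gamma;c)$. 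The source of the trouble is that you allow the differential polynomials inside $\textnormal{ac}(P)$ and $v(Q)$ to involve the f-variables of both $\bar x$ and $\bar y$; with such mixed terms the data passed to the $L_{\textnormal{rv}}$-formula $\psi$ does not split into a fixed ``object'' part and a varying ``parameter'' part, which is exactly what any transfer argument needs.

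The paper's proof is different on this direction: it argues the contrapositive, and it writes the special form with the polynomial terms \emph{separated}, i.e.\ $\varphi(x,y,z;\tilde x,\tilde y,\tilde z)$ as $\psi\big(\textnormal{ac}(P_1(x)),\ldots,v(Q_q(x)),y,z;\ \textnormal{ac}(R_1(\tilde x)),\ldots,v(S_s(\tilde x)),\tilde y,\tilde z\big)$, so that an IP pattern for $\varphi$ in $\mathcal{K}$, witnessed by $(a_i,u_i,\gamma_i)$ and $(\tilde a_I,\tilde u_I,\tilde\gamma_I)$, pushes forward coordinatewise to an IP pattern for the $L_{\textnormal{rv}}$-formula $\psi$ in $(\mathbf{k},\Gamma;c)$ --- no indiscernibles needed. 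Note that this separation of object and parameter f-variables is the real crux and does not come for free: Theorem~\ref{QuantifierElimination}, applied to the joint tuple of f-variables, only yields polynomials in $\qq\{x,\tilde x\}$, and a mixed term such as $v(x-\tilde x)$ is not in general expressible in separated special form, so some additional argument (or a reduction, e.g.\ to a single object variable with a finer analysis of the terms along the sequence) is needed before either the paper's witness-transfer or a corrected version of your indiscernible-sequence argument goes through. As it stands, your proposal has a genuine gap at both points: the criterion itself, and the unaddressed mixing of variables inside the $\textnormal{ac}$- and $v$-terms.
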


\begin{proof}
The forward direction is clear. To prove the contrapositive of the other direction, assume $\varphi(x, y, z; \tilde{x}, \tilde{y}, \tilde{z})$ is an $L_3(\textnormal{ac})$-formula having IP in $\mathcal{K}$, with $|x| = k$, $|y| = l$, $|z| = m$, $|\tilde{x}| = \tilde{k}$, $|\tilde{y}| = \tilde{l}$, $|\tilde{z}| = \tilde{m}$.
Moreover, without loss of generality, we can assume $\mathcal{K}$ is $2^{\aleph_0}$-saturated.
This means we have a sequence $\{(a_i, u_i, \gamma_i)\}_{i\in \nn}$ with $a_i\in K^k$, $u_i\in \mathbf{k}^l$ and $\gamma_i\in \Gamma^m$ and for every $I \subseteq \nn$, tuples $\tilde{a}_I\in K^{\tilde{k}}$, $\tilde{u}_I\in \mathbf{k}^{\tilde{l}}$ and $\tilde{\gamma}_I\in \Gamma^{\tilde{m}}$, such that for all $i\in\nn$ and $I\subseteq \nn$, 
$$\mathcal{K} \models \varphi(a_i, u_i, \gamma_i; \tilde{a}_I, \tilde{u}_I, \tilde{\gamma}_I) \Leftrightarrow i \in I.$$
By Theorem~\ref{QuantifierElimination}, $\varphi(x, y, z; \tilde{x}, \tilde{y}, \tilde{z})$ is $T$-equivalent to a special formula
$$ \psi\Big(\textnormal{ac}\big(\harpoon{P}(x)\big), v\big(\harpoon{Q}(x)\big), y, z; \textnormal{ac}\big(\harpoon{R}(\tilde{x})\big), v\big(\harpoon{S}(\tilde{x})\big), \tilde{y}, \tilde{z}\Big),$$
where $\psi$ is an $L_{\textnormal{rv}}$-formula and $\harpoon{P}$, $\harpoon{Q}$, $\harpoon{R}$, $\harpoon{S}$ are  finite tuples of differential polynomials in $\qq\{\tilde{x}\}$.
This yields tuples witnessing that $\psi$ has IP in 
$(\mathbf{k}, \Gamma; c)$:  
$$(\mathbf{k}, \Gamma; c) \models \psi\Big(\textnormal{ac}\big(\harpoon{P}(a_i)\big), v\big(\harpoon{Q}(a_i)\big), u_i, \gamma_i; \textnormal{ac}\big(\harpoon{R}(\tilde{a}_I)\big), v\big(\harpoon{S}(\tilde{a}_I)\big), \tilde{u}_I, \tilde{\gamma}_I\Big)
\Leftrightarrow i \in I,$$
for all $i\in \nn$ and $I\subseteq \nn$.
\end{proof}

\noindent
By Corollary~\ref{buildac} the result just proved goes through for $\d$-henselian model of $\Mo(c)$. 

\medskip\noindent
{\em Example of a $\d$-henselian model of 
$\Mo(c)$ with few constants that has NIP}. Let $K := \T[\mathrm{i}]((t^\R))_c$ be the $\d$-henselian monotone valued differential field considered in \cite[Section~4]{TH}; here $\T$ is the valued differential field of transseries, $\mathrm{i}^2 = -1$, and $c:\mathbb{R} \to \T[\mathrm{i}]$ is the additive map given by $c(r) = \mathrm{i} r$.
By \cite[Proposition~16.6.6]{ADAMTT}, $\T$ has NIP. Then the $L_{\textnormal{rv}}$-structure $(\T[\mathrm{i}], \R; c)$ has NIP, since it is interpretable in the valued differential field $\T$. Therefore, $\mathcal{K} = (K, \T[\mathrm{i}], \R; \pi, v, c)$, where $\pi$ and $v$ are the obvious maps, also has NIP.  



\section*{Acknowledgments}
\noindent
The author would like to thank Lou van den Dries for numerous discussions and comments on this paper.

\end{document}